\documentclass[journal,twoside,web]{IEEEtran}
\usepackage{cite}
\usepackage{amsmath,amssymb,amsfonts}

\usepackage{amsthm}
\usepackage{graphicx}
\usepackage{algorithm,algorithmic}
\usepackage{hyperref}
\hypersetup{hidelinks=true}
\usepackage{textcomp}
\usepackage{bbm}

\usepackage{comment}
\usepackage{mathtools}
\graphicspath{ {./images/} }
\usepackage{balance}

\newtheorem{theorem}{Theorem}

\newtheorem{remark}{Remark}
\newtheorem{proposition}{Proposition}
\newtheorem{definition}{Definition}
\newtheorem{assumption}{Assumption}
\newtheorem{corollary}{Corollary}

\begin{document}

\title{Multivariable Extremum Seeking for Locally Lipschitz Objectives}

\author{Alan Williams%
\thanks{This work was supported by the U.S. Department of Energy through the Los Alamos National Laboratory. Los Alamos National Laboratory is operated by Triad National Security, LLC, for the National Nuclear Security Administration of U.S. Department of Energy (Contract No. 89233218CNA000001).}
\thanks{Alan Williams is with the Accelerator Operations and Technology - Instrumentation and Controls (AOT-IC) Group, Adaptive Machine Learning Team at Los Alamos National Laboratory, Los Alamos, NM 87545, USA (e-mail: awilliams@lanl.gov).}
}

\maketitle

%%%%%%%%%%%%%%%%%%%%%%%%%%%%%%%%%%%%%%%%%%%%%%%%%%%%%%%%%%%%%%%%%%%%%%%%%%%%%%%%
\begin{abstract}
Classical extremum seeking (ES) is commonly interpreted as approximating
gradient descent, but this interpretation is less clear for nonsmooth
objectives in the continuous-time multivariable setting. We propose a
minimal modification of the classical multivariable
perturbation--demodulation architecture: rationally independent
perturbation frequencies and matched demodulation signals. For any locally
Lipschitz static objective, the Kronecker--Weyl theorem shows that, at every
fixed perturbation amplitude, the long-time averaged dynamics are
exactly the negative gradient of a kernel-smoothed objective. Because
rationally independent frequencies render the perturbation and
demodulation signals nonperiodic, we employ general averaging theory rather than periodic averaging theory. If the gradient flow of the smoothed objective is
globally uniformly asymptotically stable, then the ES dynamics are
practically globally uniformly asymptotically stable. We also derive a
general matching condition relating the perturbation occupation density,
demodulation signal, and smoothing kernel, yielding a family of alternative
designs. Numerical examples include a nonsmooth objective function, which may be interpreted as the penalty function of a nonlinear program, and the Rastrigin function, for which smoothing eliminates all undesired local minima.
\end{abstract}

\begin{IEEEkeywords}
Nonsmooth Optimization, Extremum Seeking, Nonsmooth Systems, Averaging Theory, Kernel Smoothing
\end{IEEEkeywords}

%%%%%%%%%%%%%%%%%%%%%%%%%%%%%%%%%%%%%%%%%%%%%%%%%%%%%%%%%%%%%%%%%%%%%%%%%%%%%%%%
\section{Introduction}
Consider the familiar continuous-time multivariable extremum seeking (ES) law
\begin{equation*}
\dot{\hat x}
=
-kJ\bigl(\hat x +S(t)\bigr)M(t).
\end{equation*}
In the classical sinusoidal design, the components of the perturbation and demodulation signals are \(S_i(t)=a\sin(\omega \hat \omega_i t)\) and \(M_i(t)=(2/a)\sin(\omega \hat \omega_i t)\) for $i=1, \dots, n$ with some perturbation amplitude $a>0$, adaptation gain $k>0$, perturbation frequencies $\hat \omega_i >0$, and frequency scale $\omega>0$. The sinusoidal perturbation \(S(t)\) explores the objective near the current parameter estimate $\hat x(t)$, and multiplication by \(M(t)\) extracts information to drive the estimate in a gradient descent direction. Standard multivariable analyses commonly choose the frequencies $\hat \omega_i$ so that \(S(t)\) has a common period $T$ and then use a small-$a$ Taylor expansion of $J$ near $\hat x$ before averaging the dynamics to come to the interpretation of the averaged dynamics as 
\begin{equation*}
    \frac{\mathrm{d}z}{\mathrm{d}\tau}
     = -\varepsilon \bigl( \nabla J(z) + O(a)\bigr).
\end{equation*}

For a locally Lipschitz objective, however, \(\nabla J\) may not exist
everywhere. A natural hope is that the averaged ES dynamics can still
be interpreted as a weighted average of nearby gradients wherever they exist. Rademacher's theorem makes this plausible: local Lipschitz continuity implies that \(\nabla J\) exists almost everywhere. The conventional periodic multivariable perturbation $S(t)$ presents an obstacle, however: in $n>1$ dimensions its trajectory only traverses a closed one-dimensional curve in $[-a, a]^n$, and cannot densely sample points in the neighborhood around the parameter estimate.

We make two minimal modifications to this classical architecture. First,
we choose rationally independent relative perturbation frequencies---for example, \(\hat\omega_1=1\) and \(\hat\omega_2=\sqrt{2}\) for $n=2$---so that the perturbation trajectory $S(t)$ densely explores the full perturbation region. The resulting perturbation trajectory is nonperiodic and dense in \([-a,a]^n\), so it comes arbitrarily close to every point in the exploration region as $t \to \infty$, and samples that region with an occupation density. Second, we match the demodulation signal to this occupation density so that every component of the averaged vector field uses the same smoothing kernel. The Kronecker--Weyl theorem then yields the key result of this paper: at every fixed perturbation amplitude, the averaged vector field is exactly the negative gradient of a smoothed objective,
\[
    \frac{\mathrm{d}z}{\mathrm{d}\tau}
    =
    -\varepsilon\nabla J_a(z),
\]
where
\[
    J_a(x)
    =
    \int_{[-1,1]^n}J(x+au)\kappa(u)\,\mathrm{d}u
\]
and \(\kappa \geq 0\) is a smoothing kernel of unit mass. Differentiability of \(J\) is not required by the Kronecker--Weyl theorem or by the general averaging results used in our analysis. In one dimension, our proposed sinusoidal design coincides with classical ES.

\subsection{Literature}

Extremum seeking (ES) is a model-free adaptive control technique that
adjusts system inputs online to optimize a measured performance output.
Stability of classical sinusoidal ES was established using averaging and
singular perturbations \cite{krstic2000stability} and subsequently treated
systematically in \cite{ariyur2003real}. Later work addressed nonlocal and
semiglobal stability \cite{tan2006non}, global convergence despite local
extrema \cite{tan2009global}, Newton-based schemes
\cite{ghaffari2012multivariable}, stochastic ES \cite{liu2015stochastic},
and constraints and safety \cite{williams2026local, williams2024semiglobal, williams2026generalized}. Smooth and nonsmooth multivariable ES based on nonlinear programming was considered
early in \cite{teel2001solving}. More recent extensions include hybrid and
accelerated architectures \cite{poveda2017framework,poveda2021robust},
time-varying objectives \cite{grushkovskaya2017extremum}, distributed
derivative-free optimization \cite{mimmo2024extremum}, delays
\cite{tsubakino2023extremum}, fixed- and prescribed-time convergence
\cite{poveda2021nonsmooth,yilmaz2024prescribed}, vanishing step sizes
\cite{grushkovskaya2024step}, uniform nonconvex guarantees
\cite{mimmo2024uniform}, and higher-order Lie-bracket averaging
\cite{pokhrel2026higher}. Game-theoretic extensions include model-free and
distributed Nash-equilibrium seeking
\cite{frihauf2012nash,stankovic2012distributed} and a nested architecture
for Stackelberg-equilibrium seeking \cite{ratto2026nested}. A comprehensive
historical and theoretical overview is provided in \cite{scheinker2024100}.

Work most closely related to ours concerns nonsmooth ES and spatial
smoothing. Lie-bracket approximations characterize broad classes of ES
systems \cite{durr2013lie} and have been extended to vector fields that may
fail to be differentiable at a point \cite{scheinker2014non}. For locally
Lipschitz objectives, \cite{suttner2023nonsmooth} uses randomly sampled
directions to obtain a stochastic gradient-like approximation, while
deterministic circular source seeking recovers the gradient of a
disk-averaged objective through the divergence theorem
\cite{suttner2024overcoming}. The higher-dimensional spherical-perturbation
design in \cite{suttner2026non} assumes a smooth objective; for \(n\geq3\),
averaging at fixed \(k\) produces a \(k\)-dependent field, with the gradient
of the ball-averaged objective recovered as \(k\to\infty\). In contrast, for arbitrary locally Lipschitz objectives, our fixed design has long-time averaged dynamics (and not periodically averaged) exactly equal to the negative gradient of an explicitly kernel-smoothed objective and admits multiple perturbation--kernel choices.
Nonsmooth high-order averaging instead yields generalized-gradient averaged
dynamics for a class of nonsmooth objectives
\cite{abdelfattah2026nonsmooth}. Experimental work has also shown that
harmonic frequency relations can periodically produce large plant
disturbances \cite{scheinker2013extremum}, motivating our use of rationally
independent frequencies. Related zeroth-order optimization methods use only function evaluations \cite{liu2020primer}. Random perturbations can provide unbiased estimates of gradients of smoothed objectives \cite{flaxman2004online}; our
design obtains such a gradient deterministically through long-time
averaging and selects the smoothing kernel through matched
perturbation--demodulation signals.

\subsection{Contributions}
The contributions of this paper are summarized as follows. 
\begin{enumerate}
    \item We introduce a multivariable ES design with
    rationally independent perturbation frequencies and matched demodulation
    signals. For an arbitrary locally Lipschitz objective, its long-time average dynamics are shown to be exactly the negative gradient of a single smoothed objective defined by a common kernel.
    \item We establish a practical global uniform asymptotic stability
    result for the proposed system when the gradient flow of the smoothed objective is globally uniformly asymptotically stable, allowing the
    analysis to cover both familiar convex settings and nonconvex objectives
    whose undesired stationary points are removed by smoothing. The result also separates the smoothing effect of the perturbation
    amplitude \(a\) from the averaging error governed by the time-scale ratio
    \(\varepsilon=k/\omega\).
    \item We derive a general matching relation between the spatial
    occupation density generated by a perturbation signal, the demodulation signal, and a
    desired smoothing kernel. This relation explains the proposed
    sinusoidal design, identifies why the classical multivariable
    demodulator does not generally yield the gradient of a single smoothed
    objective, and provides a constructive method for selecting other
    perturbation--kernel pairs. We present a sinusoidal perturbation design and a triangle-wave perturbation design with two valid matched demodulation signals.
\end{enumerate}

\textbf{Notation: }
A continuous function $\alpha:\mathbb{R}_{\ge 0}\to\mathbb{R}_{\ge 0}$ is of class $\mathcal K$ if it is strictly increasing and $\alpha(0)=0$. A continuous function $\beta:\mathbb{R}_{\ge 0}\times\mathbb{R}_{\ge 0}\to\mathbb{R}_{\ge 0}$ is of class $\mathcal{KL}$ if, for each fixed $s\ge 0$, the map $r\mapsto \beta(r,s)$ is of class $\mathcal K$, and for each fixed $r>0$, the map $s\mapsto \beta(r,s)$ is decreasing with $\beta(r,s)\to 0$ as $s\to\infty$. 
For $x \in \mathbb{R}^n$, we denote the Euclidean norm by
$\|x\|=\|x\|_2=\sqrt{x_1^2+\cdots+x_n^2}$, and \(x_{-i}\in\mathbb{R}^{n-1}\) denotes the vector obtained by removing the \(i\)th component of \(x\). The set of rationals is denoted by $\mathbb{Q}$ and $\ell \in \mathbb{Q}^n$ denotes a vector with each $\ell_i \in \mathbb{Q}$ for $i = 1 , \ldots, n$. All integrals over subsets of Euclidean space are Lebesgue integrals, and ``almost everywhere'' refers to Lebesgue measure.

\section{A Motivating Discussion}
\label{sec:motivating_discussion}

Consider the classical one-dimensional extremum seeking scheme
\begin{equation}
    \dot{\hat x}
    =
    -k\,J\bigl(\hat x+a\sin(\omega t)\bigr)
    \frac{2}{a}\sin(\omega t),
    \label{eq:ES-classical-t}
\end{equation}
where $\hat x\in\mathbb{R}$ is the parameter estimate, $k>0$ is the
adaptation gain, $a>0$ is the perturbation amplitude, $\omega>0$ is the perturbation
frequency, and $J:\mathbb{R}\to\mathbb{R}$ is the objective to be
minimized. The usual interpretation is that the perturbation
$a\sin(\omega t)$ probes nearby objective values, while multiplication by
$(2/a)\sin(\omega t)$ extracts information that drives $\hat x$ in a
descent direction. We now show that, for any locally Lipschitz objective \(J\), the averaged dynamics of this one-dimensional scheme are exactly the negative-gradient flow of a kernel-smoothed version of \(J\).

Introduce the fast time $\tau=\omega t$ and the parameter
$\varepsilon=k/\omega$. Then \eqref{eq:ES-classical-t} becomes
\begin{equation}
    \frac{\mathrm{d}\hat x}{\mathrm{d}\tau}
    =
    -\varepsilon
    J\bigl(\hat x+a\sin\tau\bigr)
    \frac{2}{a}\sin\tau.
    \label{eq:ES-classical-tau}
\end{equation}
The average of the vector field multiplying $-\varepsilon$ is
\begin{equation}
    \bar F(z)
    =
    \frac{1}{\pi a}
    \int_0^{2\pi}
    J\bigl(z+a\sin\tau\bigr)
    \sin\tau\,\mathrm{d}\tau,
    \label{eq:Fbar-first}
\end{equation}
and the averaged dynamics in fast time are
\begin{equation}
    \frac{\mathrm{d} z}{\mathrm{d}\tau}
    =
    -\varepsilon\bar F(z).
    \label{eq:avg-dyn-first}
\end{equation}

For a smooth objective, standard analysis techniques approximate $J$ through a
small-$a$ Taylor expansion about $z$. Because $J$ is only locally Lipschitz and need not be differentiable at $z$, this expansion is unavailable, so we must characterize the integral in \eqref{eq:Fbar-first} by some other means.

We take an alternative route: since \(J\) is locally Lipschitz, the composition
\(\tau\mapsto J(z+a\sin\tau)\) is Lipschitz and absolutely
continuous on \([0,2\pi]\), its
derivative exists almost everywhere\footnote{An absolutely continuous function on $\mathbb{R}$ is differentiable almost everywhere with respect to the Lebesgue measure \cite[Theorem~3.35]{folland1999real}. Famously, Rademacher also proved that a Lipschitz continuous function on $\mathbb{R}^n$ is differentiable almost everywhere \cite[Theorem~3.2]{evans1991measure}.}. So, we have
\[
    \frac{\mathrm{d}}{\mathrm{d}\tau}
    J\bigl(z+a\sin\tau\bigr)
    =
    aJ'\bigl(z +a\sin\tau\bigr)\cos\tau
\]
for almost every $\tau$. Applying integration by parts to
\eqref{eq:Fbar-first}, with the derivative $J'$ understood almost
everywhere, gives
\begin{equation}
    \bar F(z)
    =
    \frac{1}{\pi}
    \int_0^{2\pi}
    J'\bigl(z+a\sin\tau\bigr)
    \cos^2\tau\,\mathrm{d}\tau.
    \label{eq:Fbar-derivative-form}
\end{equation}
The boundary term is zero since \[\bigl. J(z + a \sin(\tau)) \cos(\tau) \bigr|^{2 \pi}_{0} =0 .\]

Since the integrand is periodic, we split the integral in \eqref{eq:Fbar-derivative-form} from $[0,2 \pi]$ into the sum of two integrals from $[-\pi/2, \pi/2]$ and $[\pi/2, 3\pi/2]$. To expose the spatial averaging performed by the perturbation, set
\(u=\sin\tau\) to arrive at
\begin{equation}
    \bar F(z)
    =
    \frac{2}{\pi}
    \int_{-1}^{1}
    J'(z+au)\sqrt{1-u^2}\,\mathrm{d}u.
    \label{eq:Fbar-u-form}
\end{equation}
Define the normalized semicircle kernel
\begin{equation}
    \kappa(u)
    :=
    \begin{cases}
        \dfrac{2}{\pi}\sqrt{1-u^2}, & |u|\leq 1,\\[1.2ex]
        0, & |u|>1.
    \end{cases}
    \label{eq:motivating-kappa}
\end{equation}
This kernel is nonnegative, even, and has unit mass. So,
\begin{equation}
    \bar F(z)
    =
    \int_{-\infty}^{\infty}
    J'(z+au)\kappa(u)\,\mathrm{d}u,
    \label{eq:Fbar-kernel}
\end{equation}
and $\bar F(z)$ is exactly a convexly weighted average of the
nearby derivatives sampled over $[z-a,z+a]$.

\begin{figure}[!t]
    \centering
    \includegraphics[width=\linewidth]
    {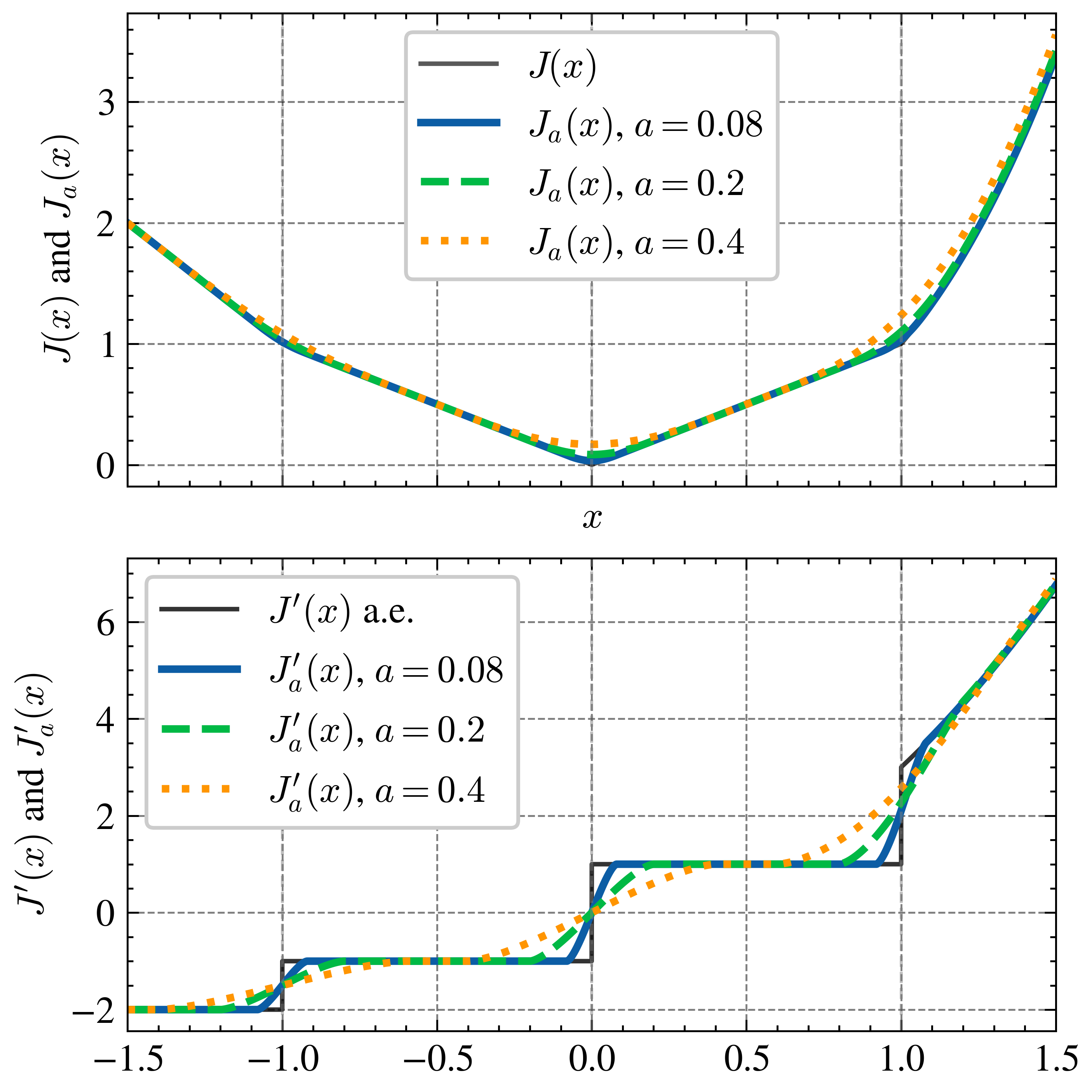}
    \caption{Semicircle smoothing of the nonsmooth objective
    $J(x)=\max\{-x,x,-2x-1,x^3\}$. The top panel shows $J$ and the corresponding
    smoothed objectives $J_a=\kappa_a*J$ for several perturbation amplitudes. The bottom panel shows the almost-everywhere derivative of \(J\) and the averaged slopes \(J_a'=\kappa_a*J'\).}
    \label{fig:averaging_example_simple}
\end{figure}

There is another equivalent interpretation. Define the
smoothed objective
\begin{equation}
    J_a(x)
    :=
    \int_{-\infty}^{\infty}
    J(x+au)\kappa(u)\,\mathrm{d}u.
    \label{eq:motivating-smoothed-objective}
\end{equation}
Since \(J\) is locally Lipschitz,
Proposition~\ref{prop:diff-under-integral} in
Appendix~\ref{app:diff-under-integral} shows that \(J_a\) is
differentiable and that
\begin{equation}
    J_a'(x)
    =
    \int_{-\infty}^{\infty}
    J'(x+au)\kappa(u)\,\mathrm{d}u.
    \label{eq:motivating-Ja-derivative}
\end{equation}
Comparing \eqref{eq:Fbar-kernel} and
\eqref{eq:motivating-Ja-derivative} shows that indeed
\begin{equation}
    \bar F(z)=J_a'(z).
    \label{eq:motivating-average-is-gradient}
\end{equation}
So the averaged ES system is exactly the gradient flow of the
smoothed objective \(J_a\), rather than a small-amplitude approximation of gradient descent on \(J\). Averaging theory \cite[Theorem~4.3.6]{sanders2007averaging} tells us that the trajectories of 
\begin{equation}
    \frac{\mathrm{d}z}{\mathrm{d}\tau}
    =-\varepsilon J_a'(z),
    \label{eq:motivating-gradient-flows}
\end{equation}
are close to the trajectories of \eqref{eq:ES-classical-tau} on finite time intervals.

The average dynamics also have the interpretation of a convolution. Take
$y=au$ and define the scaled kernel
\begin{equation}
    \kappa_a(y)
    :=
    \frac{1}{a}\kappa\!\left(\frac{y}{a}\right)
    =
    \begin{cases}
        \dfrac{2}{\pi a^2}\sqrt{a^2-y^2}, & |y|\leq a,\\[1.2ex]
        0, & |y|>a.
    \end{cases}
    \label{eq:motivating-kappa-a}
\end{equation}
Using the convention
\[
    (f*g)(x)
    :=
    \int_{-\infty}^{\infty} f(y)g(x-y)\,\mathrm{d}y,
\]
and the evenness of $\kappa_a$, the functions $J_a$ and $J_a'$ become
\begin{equation}
    J_a=\kappa_a*J,
    \qquad
    \bar F=J_a'=\kappa_a*J'.
    \label{eq:motivating-convolution-identities}
\end{equation}
Fig.~\ref{fig:averaging_example_simple} illustrates these two
interpretations for
\[
    J(x)=\max\{-x,x,-2x-1,x^3\}.
\]
The objective is nonsmooth at $x=-1$, $x=0$, and $x=1$. Its
almost-everywhere derivative therefore has jump discontinuities, whereas
the averaged slopes $J_a'=\kappa_a*J'$ are continuous. Equivalently, the
smoothed objectives $J_a=\kappa_a*J$ round the kinks of $J$. Increasing
$a$ widens the support of $\kappa_a$, so both $J_a(x)$ and $J_a'(x)$
incorporate information from a larger neighborhood of $x$.

Extending this interpretation to multiple dimensions requires addressing two issues. First, a periodic perturbation generally traces only a closed one-dimensional curve in $n>1$ and does not explore the full neighborhood of the parameter estimate. Second, the classical demodulator produces component-dependent smoothing kernels. The proposed design uses rationally independent frequencies for dense exploration and a matched demodulator that produces the same kernel in every component.

\section{Preliminaries}
\label{sec:preliminaries}

\subsection{Kronecker--Weyl Theorem}
\begin{theorem}[Kronecker--Weyl]
\label{thm:equidistribution-independent-phases}
Let $\omega_i$ for $i =1, \dots, r$ be \emph{rationally independent}, namely they satisfy
\begin{equation}
    \ell^\top\omega\neq 0
    \qquad
    \text{for every }\ell\in\mathbb{Q}^r\setminus\{0\}.
\end{equation}
If $f:\mathbb{R}^r\to\mathbb{R}$ is continuous and $2\pi$-periodic
in each argument, then
\begin{equation}
\label{eq:independent-phase-average}
\begin{aligned}
    &\lim_{T\to\infty}\frac{1}{T}\int_0^T
    f(\omega_1t,\ldots,\omega_rt)\,\mathrm{d}t \\
    &\qquad =
    \frac{1}{(2\pi)^r}\int_{[0,2\pi]^r}
    f(u_1,\ldots,u_r)\,\mathrm{d}u_1\cdots\mathrm{d}u_r .
\end{aligned}
\end{equation}
\end{theorem}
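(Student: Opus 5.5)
The plan is the classical Weyl argument: verify the identity on trigonometric monomials, extend it to trigonometric polynomials by linearity, and then pass to all continuous periodic $f$ by uniform density.

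\textbf{Monomials.} Take $f(u)=e^{\mathrm{i}m^\top u}$ with $m\in\mathbb{Z}^r$, and work with complex-valued functions; real and imaginary parts then give the real statement.

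If $m=0$, both sides of \eqref{eq:independent-phase-average} equal $1$.

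If $m\neq 0$, then $m\in\mathbb{Q}^r\setminus\{0\}$, so rational independence gives $\lambda:=m^\top\omega\neq 0$. Hence
\[
\frac1T\int_0^T e^{\mathrm{i}\lambda t}\,\mathrm{d}t=\frac{e^{\mathrm{i}\lambda T}-1}{\mathrm{i}\lambda T},
\qquad
\left|\frac{e^{\mathrm{i}\lambda T}-1}{\mathrm{i}\lambda T}\right|\le \frac{2}{|\lambda|T}\to 0 .
\]
On the other side, the right-hand integral factorizes into one-dimensional integrals $\int_0^{2\pi}e^{\mathrm{i}m_ju_j}\,\mathrm{d}u_j$. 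At least one of these has $m_j\neq0$ and therefore vanishes, so the right-hand side is $0$. The two sides agree.

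\textbf{Trigonometric polynomials.} Both sides of \eqref{eq:independent-phase-average} are linear in $f$. The identity therefore holds for every finite linear combination $P$ of such monomials, with the limit existing.

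\textbf{Approximation.} Fix a continuous $f$ that is $2\pi$-periodic in each argument, and $\delta>0$. Such an $f$ is a continuous function on the torus $\mathbb{T}^r$. By Stone--Weierstrass, or by Fejér summation of the multivariate Fourier series, there is a trigonometric polynomial $P$ with $\sup|f-P|<\delta$.

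Let $A_T(g)$ denote the time average on the left of \eqref{eq:independent-phase-average} and $\bar g$ the space average on the right. Then $|A_T(f)-A_T(P)|\le\delta$ for every $T$, and $|\bar f-\bar P|\le\delta$. Since $A_T(P)\to\bar P$,
\[
\limsup_{T\to\infty}|A_T(f)-\bar f|\le 2\delta .
\]
Because $\delta$ is arbitrary, the limit exists and equals $\bar f$.

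\textbf{Main point.} The only substantive inputs are the density of trigonometric polynomials in $C(\mathbb{T}^r)$ under the sup norm, and the observation that rational independence over $\mathbb{Q}^r$ implies in particular $m^\top\omega\neq0$ for all nonzero integer vectors $m$. I would cite Stone--Weierstrass for the density: exponentials $e^{\mathrm{i}m^\top u}$ form a self-adjoint algebra containing constants and separating points of the torus.
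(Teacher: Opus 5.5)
Your proof is correct and complete. The monomial step uses only $m^\top\omega\neq0$ for nonzero integer $m$, the torus integral factorizes as you say, and the $2\delta$ sandwich with a Stone--Weierstrass approximant finishes the argument (for real $f$ you may equally use $\mathrm{Re}\,P$).

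The paper gives no proof of its own. It cites \cite[Theorem~2.7]{bailleul2022explicit}, a Kronecker--Weyl theorem for general frequency vectors. There the time average converges to the integral against normalized Haar measure on the closure of the orbit. The paper then notes that under rational independence this closure is the whole torus, whose Haar measure is $(2\pi)^{-r}\,\mathrm{d}u$ in phase coordinates.

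The citation buys generality: it also covers rationally dependent frequencies, such as the closed orbit for $\hat\omega=(1,2)$ in the paper's figure. The price is translating Haar measure into phase coordinates. Your Weyl-criterion route is self-contained and shows exactly where rational independence is used.

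Your route also gives more for free. Since $\bigl|\frac{1}{T}\int_{\tau_0}^{\tau_0+T}e^{\mathrm{i}(\lambda s+m^\top\varphi)}\,\mathrm{d}s\bigr|\le 2/(|\lambda|T)$ independently of $\tau_0$ and $\varphi$, and the approximation error $\delta$ is likewise independent of them, your estimate is automatically uniform in the initial time and in constant phase offsets. The paper obtains this separately in Corollary~\ref{cor:uniform-KW-average} via an equicontinuity argument. Uniformity in the state $x$ would further require choosing the approximant uniformly in $x$, for example by Fej\'er means, which converge uniformly on $K\times[0,2\pi]^r$ by uniform continuity.
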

%\vspace{1mm}

This is the rationally independent special case of
\cite[Theorem~2.7]{bailleul2022explicit}. The componentwise periodicity of
\(f\) allows each argument to be taken modulo \(2\pi\). The normalized Haar measure appearing in that theorem is represented in
these phase coordinates by the normalized Lebesgue measure
\((2\pi)^{-r}\,\mathrm{d}u_1\cdots\mathrm{d}u_r\) on
\([0,2\pi]^r\), giving \eqref{eq:independent-phase-average}.

Informally, rational independence prevents the relative phases from becoming
locked into a repeating lower-dimensional pattern. Over long times, the
phase trajectory \(\omega t\), with each component taken modulo \(2\pi\),
explores \([0,2\pi]^r\) uniformly, so the time average along a single signal
equals the uniform average over all phase variables.

\subsection{General Averaging}
\label{subsec:general-averaging}

Consider the initial-value problem
\begin{equation}
    \frac{\mathrm{d}x}{\mathrm{d}\tau}
    =
    \varepsilon f(x,\tau),
    \qquad
    x(\tau_0)=x_0,
    \label{eq:general-averaging-original}
\end{equation}
where \(\tau_0\in\mathbb{R}\). When it exists independently of
\(\tau_0\), define the common long-time average of \(f\) by
\begin{equation}
    \bar f(x)
    :=
    \lim_{T\to\infty}
    \frac{1}{T}
    \int_{\tau_0}^{\tau_0+T}
    f(x,s)\,\mathrm{d}s.
    \label{eq:general-average-limit}
\end{equation}
The corresponding average system is
\begin{equation}
    \frac{\mathrm{d}z}{\mathrm{d}\tau}
    =
    \varepsilon\bar f(z),
    \qquad
    z(\tau_0)=x_0,
    \label{eq:general-averaging-average}
\end{equation}
where \(x(\tau),z(\tau),x_0\in D\subset\mathbb{R}^n\),
\(\tau\in[\tau_0,\infty)\), and
\(\varepsilon\in(0,\varepsilon_0]\). The following result summarizes
the portion of general averaging theory used in this paper.

\begin{theorem}[General Averaging]
\label{thm:Sanders-general-averaging}
Suppose that \(f\) is continuous and locally Lipschitz in \(x\),
uniformly in \(\tau\) on compact subsets of \(D\), and that the limit
in \eqref{eq:general-average-limit} exists uniformly with respect to
\(x\) on compact subsets of \(D\) and
\(\tau_0\in\mathbb{R}\). Fix \(L>0\), and suppose that the solution
\(z(\tau)\) of \eqref{eq:general-averaging-average} remains in an
interior compact subset of \(D\) for
\(
    \tau_0
    \leq
    \tau
    \leq
    \tau_0+L/\varepsilon.
\)
Then there exists an associated order function
\(\delta_1(\varepsilon)\) satisfying
\begin{equation}
    \lim_{\varepsilon\to0}\delta_1(\varepsilon)=0
    \label{eq:Sanders-delta-limit}
\end{equation}
such that
\begin{equation}
    \|x(\tau)-z(\tau)\|
    =
    O\!\left(\sqrt{\delta_1(\varepsilon)}\right),
    \qquad
    \tau_0
    \leq
    \tau
    \leq
    \tau_0+\frac{L}{\varepsilon},
    \label{eq:Sanders-general-error}
\end{equation}
as \(\varepsilon\to0\), uniformly over the time interval
and with respect to \(\tau_0\in\mathbb{R}\) and \(x_0\) on compact
subsets of \(D\).
\end{theorem}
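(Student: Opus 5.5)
We would prove Theorem~\ref{thm:Sanders-general-averaging} along the classical route of general (non-periodic) averaging. The central device is a near-identity transformation whose generator is built from the \emph{local} average error rather than from an exact periodic antiderivative.

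\textbf{Step 1: averaging error and its order function.} Fix a compact $K\subset D$ containing a neighborhood of the average trajectory. Define the local average error
\[
    \delta(x,\tau_0,T):=\Bigl\|\frac1T\int_{\tau_0}^{\tau_0+T}\bigl(f(x,s)-\bar f(x)\bigr)\,\mathrm{d}s\Bigr\|,
\]
and the corresponding order function
\[
    \delta_1(\varepsilon):=\sup_{x\in K,\ \tau_0\in\mathbb{R}}\ \sup_{0\le t\le L}\ \varepsilon\Bigl\|\int_{\tau_0}^{\tau_0+t/\varepsilon}\bigl(f(x,s)-\bar f(x)\bigr)\,\mathrm{d}s\Bigr\|.
\]
We then show $\delta_1(\varepsilon)\to0$, splitting according to the size of $t/\varepsilon$:
\begin{itemize}
\item when $t/\varepsilon\ge T_0$, the assumed uniform convergence of \eqref{eq:general-average-limit} makes the integral at most $(t/\varepsilon)\eta$;
\item when $t/\varepsilon<T_0$, boundedness of $f$ on $K$ gives at most $2MT_0$.
\end{itemize}
Multiplying by $\varepsilon$ gives $\delta_1(\varepsilon)\le L\eta+2M T_0\varepsilon$. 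Letting $\varepsilon\to0$ and then $\eta\to0$ yields \eqref{eq:Sanders-delta-limit}.

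\textbf{Step 2: near-identity transformation.} A direct Gronwall comparison fails, because the error term $f(x,\tau)-\bar f(x)$ does not vanish pointwise. Following Sanders--Verhulst, we introduce a smoothed generator
\[
    u(x,\tau):=\frac1T\int_0^T\!\!\int_{\tau}^{\tau+s}\bigl(f(x,\sigma)-\bar f(x)\bigr)\,\mathrm{d}\sigma\,\mathrm{d}s
\]
with window $T=T(\varepsilon)$. Equivalently, $u$ is a short-window integral of $f-\bar f$, and $x=y+\varepsilon u(y,\tau)$.

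Estimating $\|u\|$ and $\|\partial_\tau u-(f-\bar f)\|$ through $\delta_1$ and the Lipschitz constant $\lambda$ of $f$ on $K$ gives $\varepsilon\|u\|=O(\delta_1/\sqrt{\cdot})$. The transformed equation takes the form
\[
    \dot y=\varepsilon\bar f(y)+\varepsilon R,
\]
with $\|R\|=O(\sqrt{\delta_1})$ after choosing $T\sim 1/\sqrt{\varepsilon\,\delta_1}$-type scaling to balance the two error sources.

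\textbf{Main obstacle.} The difficulty lies in Step 2: $f$ is only Lipschitz in $x$, so $\partial_x u$ may not exist. We would first replace derivatives with Lipschitz difference estimates for $u$ and work with the integral form of the equations. The square root in \eqref{eq:Sanders-general-error} arises exactly from this balancing of window length against local average error.

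\textbf{Step 3: Gronwall and trapping.} With the transformed equation in hand, we apply Gronwall on $[\tau_0,\tau_0+L/\varepsilon]$ to bound
\[
    \|y-z\|\le \sqrt{\delta_1}\,C\,e^{\lambda L}.
\]
A continuation (trapping) argument keeps $x$ inside the neighborhood $K$ of the average trajectory for small $\varepsilon$. Adding the transformation error then gives \eqref{eq:Sanders-general-error}. All constants depend only on $K$, $L$ and $\lambda$, which yields uniformity in $\tau_0$ and in $x_0$ on compacts.
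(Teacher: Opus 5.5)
Your route differs from the paper's. The paper does not prove this theorem from first principles. It notes that $f$ is a KBM vector field and takes $\delta_1\to0$ and the $O(\sqrt{\delta_1})$ estimate directly from Lemma~4.3.4 and Theorem~4.3.6 of \cite{sanders2007averaging}. It gets uniformity in $\tau_0$ from the time translation $f_{\tau_0}(x,r)=f(x,r+\tau_0)$, which makes $\tau_0$ an $\varepsilon$-independent parameter, so the assumed $\tau_0$-uniform average is exactly the parameter uniformity in the KBM definition. You instead re-derive the textbook theorem (order function, local-average near-identity transformation, Gronwall, trapping) and build the $\tau_0$-uniformity into $\delta_1$ through the supremum over $\tau_0\in\mathbb{R}$. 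That route is legitimate, self-contained, and makes the uniformity transparent. Your Step~1 is fine; it uses a bound $M$ for $f$ on $K\times\mathbb{R}$, which is not a stated hypothesis but holds in the paper's application, where $F$ is continuous and $2\pi$-periodic in $\theta$. The price is that the transformation must be done correctly, and Step~2 is not correct as written.

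Let $g:=f-\bar f$ and $f^{T}(x,\tau):=\frac1T\int_\tau^{\tau+T}f(x,\sigma)\,\mathrm{d}\sigma$. Then your $u$ satisfies $\partial_\tau u=f^{T}-f$, so $\partial_\tau u-(f-\bar f)=(f^{T}-\bar f)-2g$ is not small, and $x=y+\varepsilon u(y,\tau)$ gives $\dot y\approx\varepsilon(2f-\bar f)$. The sign must be reversed; the relevant error is then $\|f^{T}-\bar f\|\le\delta_1/(\varepsilon T)$ for $\varepsilon T\le L$.

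The window scaling is also wrong. The remainder is $\|R\|=O\bigl(\delta_1/(\varepsilon T)+\lambda M\varepsilon T\bigr)$. The second term comes from the $x$-Lipschitz constant $\lambda T$ of $u$ multiplying $\dot x=O(\varepsilon)$, and from the shift $\varepsilon\|u\|\le\varepsilon MT$ between $x$ and the transformed variable. Balancing therefore requires $T\asymp\sqrt{\delta_1}/\varepsilon$. Your choice $T\sim1/\sqrt{\varepsilon\delta_1}$ gives $\varepsilon T=\sqrt{\varepsilon/\delta_1}$, which does not tend to zero in the periodic case $\delta_1=O(\varepsilon)$.

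The non-differentiability you flag can be handled by avoiding the implicit $y$. Put $\xi:=x+\varepsilon u(x,\tau)$. Since $u$ is $\lambda T$-Lipschitz in $x$ and $2M$-Lipschitz in $\tau$, $\xi$ is Lipschitz in $\tau$ with $\dot\xi=\varepsilon\bar f(\xi)+\varepsilon R$ almost everywhere. Gronwall together with $\|x-\xi\|\le\varepsilon MT$ then yields $O(\sqrt{\delta_1})$.

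Finally, the statement only asks for \emph{some} $\delta_1\to0$, so your claim that a direct comparison fails is too pessimistic. Compare $x$ and $z$ in integral form and freeze $z$ on consecutive windows of length $T$, with $T\to\infty$ and $\varepsilon T\to0$. This gives $\|x-z\|=O(\eta(T)+\varepsilon T)$, where $\eta(T)$ is the $\tau_0$-uniform averaging error, and needs no transformation at all.
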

%\vspace{1mm}

In the terminology of
\cite[Definition~4.2.4]{sanders2007averaging}, the continuity,
Lipschitz, and uniform-average hypotheses above state that \(f\) is a
``KBM-vector field.'' Lemma~4.3.4 of
\cite{sanders2007averaging} establishes
\eqref{eq:Sanders-delta-limit}, while
\cite[Theorem~4.3.6]{sanders2007averaging} gives
\eqref{eq:Sanders-general-error}. If \(f\) depends on additional
parameters, the parameters and initial conditions are taken
independently of \(\varepsilon\), and the limit in
\eqref{eq:general-average-limit} is assumed to be uniform in those
parameters; see the discussion below
\cite[Definition~4.2.4]{sanders2007averaging}. Since \(L\) is fixed
but arbitrary, the theorem gives trajectory convergence on every
fixed finite interval in slow time \(s=\varepsilon \tau\).

The original result
\cite[Theorem~4.3.6]{sanders2007averaging} is stated with initial time
zero, and the form above is its immediate time-translation extension.
To see this, set \(r=\tau-\tau_0\) and define
\[
    f_{\tau_0}(x,r):=f(x,r+\tau_0).
\]
The translated initial-value problem begins at \(r=0\), with
\(\tau_0\) appearing as an additional parameter independent of
\(\varepsilon\). Moreover,
\[
    \frac{1}{T}\int_0^T
    f_{\tau_0}(x,r)\,\mathrm{d}r
    =
    \frac{1}{T}\int_{\tau_0}^{\tau_0+T}
    f(x,s)\,\mathrm{d}s.
\]
Thus the assumed uniformity in \(\tau_0\) is precisely the parameter
uniformity required in the KBM definition
\cite[Definition~4.2.4]{sanders2007averaging}. Applying
\cite[Theorem~4.3.6]{sanders2007averaging} to the resulting
initial-value problem with initial time \(r=0\), uniformly with respect
to \(\tau_0\), and returning to \(\tau=r+\tau_0\) gives
\eqref{eq:Sanders-general-error}.

\subsection{Practical Global Uniform Asymptotic Stability} \label{subsec:PGUAS}

Consider the parameter-dependent system
\begin{equation}
    \dot{x}
    =
    f(t,x,\varepsilon),
    \qquad
    x(t_0)=x_0,
    \qquad
    0<\varepsilon\leq\varepsilon_0,
    \label{eq:practical-parameter-system}
\end{equation}
and the limiting system
\begin{equation}
    \dot{z}
    =
    g(t,z),
    \qquad
    z(t_0)=x_0.
    \label{eq:practical-limit-system}
\end{equation}

\begin{definition} \label{def:GUAS}
    The origin of \eqref{eq:practical-limit-system} is said to be
\emph{globally uniformly asymptotically stable} (GUAS) if there exists \(\beta\in\mathcal{KL}\) such that every
solution is defined for all \(t\geq t_0\) and satisfies
\begin{equation}
    \|z(t)\|
    \leq
    \beta\bigl(\|x_0\|,t-t_0\bigr),
    \qquad
    t\geq t_0.
    \label{eq:GUAS-KL}
\end{equation}
\end{definition}
%\vspace{1mm}

\begin{definition} \label{def:PGUAS}
    The origin of \eqref{eq:practical-parameter-system} is said to be
\emph{practically globally uniformly asymptotically stable} (PGUAS)
if there exists a function \(\beta\in\mathcal{KL}\) such that,
for every \(\Delta,\nu>0\), there exists
\(\varepsilon^*(\Delta,\nu)\in(0,\varepsilon_0]\) such that, for every
\(t_0\in\mathbb{R}\), \(\|x_0\|\leq\Delta\), and
\(0<\varepsilon<\varepsilon^*(\Delta,\nu)\), the corresponding
solution is defined for all \(t\geq t_0\) and satisfies
\begin{equation}
    \|x(t,\varepsilon)\|
    \leq
    \beta\bigl(\|x_0\|,t-t_0\bigr)+\nu,
    \qquad
    t\geq t_0,
    \label{eq:PGUAS-KL}
\end{equation}
where the function \(\beta\) is independent of
\(\Delta\), \(\nu\), and \(\varepsilon\).
\end{definition}
%\vspace{1mm}

We use Definitions~\ref{def:GUAS} and~\ref{def:PGUAS} as the meanings
of GUAS and PGUAS throughout this paper. Moreau and Aeyels in \cite{moreau2000practical} instead
formulate these notions through separate uniform stability,
boundedness, and attractivity properties.\footnote{The two
formulations can be shown to be equivalent, which justifies using the
same terminology. Their equivalence is not needed here:
Proposition~\ref{prop:moreau-KL-bound} derives the
\(\mathcal{KL}\) conclusion in Definition~\ref{def:PGUAS} directly
from \cite[Theorem~1]{moreau2000practical}.} The
following theorem is a consequence of
\cite[Theorem~1]{moreau2000practical}, given in terms of $\mathcal{KL}$ functions. Proposition~\ref{prop:moreau-KL-bound} in Appendix~\ref{app:KL-bounds} includes the argument yielding the bound in Definition~\ref{def:PGUAS}.

\begin{theorem}[Practical Stability]
\label{thm:practical-stability}
Consider systems
\eqref{eq:practical-parameter-system} and
\eqref{eq:practical-limit-system}. Suppose the following conditions
hold:
\begin{enumerate}
    \item (\emph{Existence and Uniqueness})
    For each \(\varepsilon\in(0,\varepsilon_0]\), the function
    \(f(\cdot,\cdot,\varepsilon)\) is continuous and locally Lipschitz
    in \(x\), uniformly in \(t\) on compact time intervals. The
    function \(g\) is continuous and locally Lipschitz in \(z\),
    uniformly in \(t\) on compact time intervals.

    \item (\emph{Convergence of Trajectories})
For every \(T>0\), every compact set \(K\subset\mathbb{R}^{n}\) such that
the solution \(z(t)\) is defined on \([t_0,t_0+T]\) for every
\(t_0\in\mathbb{R}\) and \(x_0\in K\), and every \(d>0\), there exists
\(\bar{\varepsilon}\in(0,\varepsilon_0]\) such that, for every
\(t_0\in\mathbb{R}\), \(x_0\in K\), and
\(0<\varepsilon<\bar{\varepsilon}\), the solution
\(x(t,\varepsilon)\) exists on \([t_0,t_0+T]\) and
\begin{equation}
    \|x(t,\varepsilon)-z(t)\|
    <
    d,
    \qquad
    t_0\leq t\leq t_0+T.
    \label{eq:uniform-trajectory-convergence}
\end{equation}
\end{enumerate}
If the origin is a GUAS equilibrium of
\eqref{eq:practical-limit-system}, then the origin of
\eqref{eq:practical-parameter-system} is PGUAS.
\end{theorem}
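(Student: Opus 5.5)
The plan is to prove the $\mathcal{KL}$ estimate of Definition~\ref{def:PGUAS} directly from the two hypotheses, by a chaining argument over windows of fixed length $T$. The idea is the one behind \cite[Theorem~1]{moreau2000practical}; the only difference is that I track the $\mathcal{KL}$ function $\beta$ from Definition~\ref{def:GUAS} explicitly. I will use \emph{the same} $\beta$ that certifies GUAS of \eqref{eq:practical-limit-system}. Condition~1 gives local existence and uniqueness for both systems. Definition~\ref{def:GUAS} says every solution $z$ exists for all $t\ge t_0$, so condition~2 applies on any window and for any compact $K$.

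Fix $\Delta,\nu>0$ and, without loss of generality, assume $\nu$ is small. First choose $\rho\in(0,\Delta]$ with $\beta(\rho,0)+\rho/2\le\nu$; this is possible because $\beta(\cdot,0)\in\mathcal K$. Next choose $T>0$ with $\beta(\Delta,T)\le\rho/2$, using $\beta(\Delta,s)\to0$. Set $d:=\min\{\rho/2,\nu\}$ and let $K$ be the closed ball of radius $\Delta$. Condition~2 then yields $\bar\varepsilon$ such that, for every initial time $s_0\in\mathbb{R}$, every initial point in $K$, and every $0<\varepsilon<\bar\varepsilon$, the solution $x$ exists on $[s_0,s_0+T]$ and stays within $d$ of the corresponding $z$. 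Take $\varepsilon^*:=\bar\varepsilon$.

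Now run the windows. On the first window $[t_0,t_0+T]$, with $\|x_0\|\le\Delta$, we get $\|x(t)\|\le\beta(\|x_0\|,t-t_0)+d\le\beta(\|x_0\|,t-t_0)+\nu$. At the endpoint, $\|x(t_0+T)\|\le\beta(\Delta,T)+d\le\rho$. For the induction, suppose $\|x(t_0+jT)\|\le\rho$. Restart condition~2 at $s_0=t_0+jT$; this is legitimate because the estimate is uniform in the initial time and the ball of radius $\rho$ lies in $K$. On the next window this gives $\|x(t)\|\le\beta(\rho,0)+d\le\nu$. It also gives $\|x(t_0+(j+1)T)\|\le\beta(\rho,T)+d\le\beta(\Delta,T)+\rho/2\le\rho$, where monotonicity of $\beta$ in its first argument was used. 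Induction therefore gives existence for all $t\ge t_0$, and $\|x(t)\|\le\nu\le\beta(\|x_0\|,t-t_0)+\nu$ for every $t\ge t_0+T$. Combined with the first-window bound, this is \eqref{eq:PGUAS-KL} with $\beta$ independent of $\Delta,\nu,\varepsilon$.

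The main obstacle is choosing $\rho$, $T$, and $d$ in the right order so that the windows chain: the return ball must lie inside $K$ and be mapped back into itself. A second concern is that the uniformity of condition~2 in $t_0$ is what allows restarting at $t_0+jT$. I would also double-check whether ``solution $z$ defined on $[t_0,t_0+T]$ for all $x_0\in K$'' is automatically satisfied here; it is, since Definition~\ref{def:GUAS} requires global existence. If any subtlety arises, the fallback is to invoke \cite[Theorem~1]{moreau2000practical} for practical uniform stability, boundedness, and attractivity, and then convert these to the $\mathcal{KL}$ form as in Proposition~\ref{prop:moreau-KL-bound}.
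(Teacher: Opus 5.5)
Your proof is correct, and it takes a genuinely different route from the paper for the tail of the estimate.

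The paper's argument, in Proposition~\ref{prop:moreau-KL-bound}, treats \cite[Theorem~1]{moreau2000practical} as a black box. Practical global uniform attractivity supplies a horizon $T$ and a threshold $\eta_a$ such that solutions starting in the $\Delta$-ball are forward complete and lie in the $\nu$-ball for $t\ge t_0+T$. Condition~2, with $d=\nu$, is then used only on the first window $[t_0,t_0+T]$, and the paper takes $\varepsilon^*=\min\{\eta_a,\eta_c\}$.

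Your first-window step is the same as the paper's. For the tail, however, you replace the cited attractivity and forward-completeness conclusions with an explicit induction. The choices $\beta(\rho,0)+\rho/2\le\nu$, $\beta(\Delta,T)\le\rho/2$ and $d=\rho/2$ do the work: the $\rho$-ball, which lies inside $K$, is mapped into itself at the instants $t_0+jT$, and the state stays in the $\nu$-ball between those instants. Uniqueness from Condition~1 and the $t_0$-uniformity of Condition~2 justify the restarts.

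What your version buys:
\begin{itemize}
\item It is self-contained and uses a single application of Condition~2, so there is only one threshold $\bar\varepsilon$.
\item It produces forward completeness directly.
\item It shows explicitly that the GUAS function $\beta$ itself serves in the PGUAS bound.
\item It avoids translating Moreau and Aeyels' separate stability, boundedness and attractivity notions into the $\mathcal{KL}$ form of Definition~\ref{def:PGUAS}.
\end{itemize}
The paper's version is shorter but defers the substantive step to the external theorem.

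A minor point: your ``without loss of generality $\nu$ small'' and the minimum in the definition of $d$ are unnecessary. The bound $\rho/2\le\nu$ already follows from your choice of $\rho$, and the fallback you mention is not needed.
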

%\vspace{1mm}

The implication of Theorem~\ref{thm:practical-stability} is that a parameter-dependent system inherits practical global uniform asymptotic stability whenever its trajectories approximate, uniformly over finite time intervals, those of a GUAS limiting system. Because GUAS implies forward completeness of the limiting system, the existence qualification on \(z\) in Condition~2 is automatic in our application. 

\section{Extremum Seeking Design for Locally Lipschitz Objectives}
\label{sec:design}

We consider the minimization of a static objective
$J:\mathbb{R}^n\to\mathbb{R}$ using only evaluations of $J$. The parameter
estimate is denoted by $\hat x\in\mathbb{R}^n$.

\begin{assumption}[Objective Lipschitzness]
\label{ass:objective-lipschitz}
The objective $J:\mathbb{R}^n\to\mathbb{R}$ is locally Lipschitz.
\end{assumption}
%\vspace{1mm}

No derivative of $J$ is required by the algorithm. Assumption
\ref{ass:objective-lipschitz} guarantees local existence and uniqueness of
the resulting dynamics. It also implies, by Rademacher's theorem, that
$\nabla J$ exists almost everywhere and is essentially bounded on compact
sets.

Let $a,k,\omega>0$, and choose frequencies
$\hat\omega_1,\ldots,\hat\omega_n>0$ satisfying the following condition.

\begin{assumption}[Rationally Independent Frequencies]
\label{ass:rationally-independent-frequencies}
The relative-frequency vector
\(\hat\omega:=(\hat\omega_1,\ldots,\hat\omega_n)^\top\)
is rationally independent; that is,
\begin{equation}
    \ell^\top\hat\omega\neq 0
    \qquad
    \text{for every } \ell\in\mathbb{Q}^n\setminus\{0\}.
    \label{eq:rational-independence}
\end{equation}
\end{assumption}
%\vspace{1mm}

This is the key frequency requirement for applying Theorem~\ref{thm:equidistribution-independent-phases}. Together with the general
averaging theorem, it will produce a particularly clean expression for the
average dynamics. A straightforward choice is obtained by choosing distinct
prime numbers $p_1,\ldots,p_n$ and setting
\begin{equation}
    \hat\omega_i=\sqrt{p_i} \, .
    \label{eq:explicit-frequency-choice}
\end{equation}

Define the
perturbation and demodulation signals by
\begin{align}
    S_i(t)
    &:={}
    a\sin(\omega\hat\omega_i t),
    \label{eq:design-perturbation}
    \\
    M_i(t)
    &:={}
    \frac{2^n}{a}
    \sin(\omega\hat\omega_i t)
    \prod_{j\neq i}
    \cos^2(\omega\hat\omega_j t).
    \label{eq:design-demodulator}
\end{align}
The proposed
extremum seeking law is the $n$-dimensional system
\begin{equation}
    \dot{\hat x}(t)
    =
    -kJ\bigl(\hat x(t)+S(t)\bigr)M(t),
    \qquad
    \hat x(t_0)=x_0,
    \label{eq:proposed-ES-vector}
\end{equation}
where \(t_0\in\mathbb{R}\) and \(x_0\in\mathbb{R}^n\).

There is no change in the main result if a constant initial phase is included
by replacing each $\omega\hat\omega_i t$ with
$\omega\hat\omega_i t+\phi_i$. A vector amplitude may also be used. In that
case, $S(t)=a\odot\sin(\omega\hat\omega t)$, the factor $1/a$ in the $i$th
demodulation channel is replaced by $1/a_i$. Throughout the analysis, trigonometric functions of vectors are understood componentwise.

The averaging accuracy depends on making the ratio $k/\omega$ small. A practitioner may therefore decrease $k$, increase $\omega$, or tune both. The gain $k$ controls the adaptation time scale, while $\omega$ controls the perturbation time scale. The parameter $a$ determines the exploration region around the parameter estimate.

\section{Averaging Analysis}
\label{sec:analysis}
This section derives the averaged system in four steps. First, we introduce
the fast time \(\tau=\omega t\). Second, the Kronecker--Weyl theorem converts
the long-time average into an integral over the phase variables.
Third, integration by parts reveals the almost-everywhere gradient of \(J\).
Finally, a spatial change of variables identifies a common smoothing kernel
and shows that the average system is the gradient flow of the smoothed
objective \(J_a\).

\subsection{General Averaging Form}
\label{subsec:ES-general-averaging}

Starting from the $n$-dimensional dynamics
\eqref{eq:proposed-ES-vector}, introduce fast time
\begin{equation}
    \tau:=\omega t,
    \qquad
    \varepsilon:=\frac{k}{\omega}.
    \label{eq:fast-time-and-epsilon}
\end{equation}
This places the system in the standard form for averaging and collects the adaptation gain $k$ and perturbation scale $\omega$ into the single parameter
$\varepsilon$. So, the small parameter required by Theorem~\ref{thm:Sanders-general-averaging} can be produced by $k$ small, $\omega$ large, or $k/\omega$ small.

In fast time,
\begin{align}
    S_i(\tau/\omega)
    &=a\sin(\hat \omega_i \tau),
    \label{eq:fast-time-perturbation}
    \\
    M_i(\tau/\omega)
    &=
    \frac{2^n}{a}
    \sin(\hat\omega_i\tau)
    \prod_{j\neq i}\cos^2(\hat\omega_j\tau).
    \label{eq:fast-time-demodulator}
\end{align}
For $i=1,\ldots,n$, define
\begin{equation}
    F_i(x,\theta)
    :=
    -\frac{2^n}{a}
    J\bigl(x+a\sin\theta\bigr)
    \sin\theta_i \,
    \prod_{j\neq i}\cos^2\theta_j\, ,
    \label{eq:phase-vector-field}
\end{equation}
then \eqref{eq:proposed-ES-vector} becomes
\begin{equation}
    \frac{\mathrm{d}\hat x}{\mathrm{d} \tau}
    =
    \varepsilon F(\hat x,\hat\omega\tau).
    \label{eq:ES-fast-time}
\end{equation}
On every compact subset of $\mathbb{R}^n$, the right-hand side is Lipschitz
in $\hat x$ uniformly in $\tau$. The regularity hypotheses of the general averaging theorem are
therefore satisfied. It remains to show that the moving-window average
exists uniformly with respect to the state on compact sets and the
initial time.

\subsection{Kronecker--Weyl Average}
\label{subsec:ES-KW-average}

For each component of \eqref{eq:ES-fast-time}, the limit
required by the general averaging theorem is
\begin{equation}
    \bar F_i(z)
    :=
    \lim_{T\to\infty}
    \frac{1}{T}
    \int_{\tau_0}^{\tau_0+T}
    F_i(z,\hat\omega\tau)\,\mathrm{d}\tau.
    \label{eq:componentwise-average-limit}
\end{equation}
The corresponding average system is
\begin{equation}
    \frac{\mathrm{d}z}{\mathrm{d}\tau}
    =
    \varepsilon\bar F(z),
    \qquad
    z(\tau_0)=\hat x(\tau_0).
    \label{eq:ES-average-system}
\end{equation}
For any fixed \(\tau_0\), the change of variables
\(r=\tau-\tau_0\) gives
\[
    \frac{1}{T}
    \int_{\tau_0}^{\tau_0+T}
    F_i(z,\hat\omega\tau)\,\mathrm{d}\tau
    =
    \frac{1}{T}
    \int_0^T
    F_i\bigl(z,\hat\omega r+\hat\omega\tau_0\bigr)
    \,\mathrm{d}r.
\]
The integrand is continuous and \(2\pi\)-periodic in each of its phase
arguments. Assumption
\ref{ass:rationally-independent-frequencies} therefore permits
application of
Theorem~\ref{thm:equidistribution-independent-phases} to the
periodic function, shifted in phase by a constant $\hat\omega\tau_0$. Translation invariance of the integral gives
\begin{multline}
    \bar F_i(z)
    =
    -\frac{1}{a\pi^n}
    \int_{[0,2\pi]^n}
    J\bigl(z+a\sin\theta\bigr)
    \sin\theta_i
    \prod_{j\neq i}\cos^2\theta_j
    \,\mathrm{d}\theta.
    \label{eq:ES-phase-average}
\end{multline}
Here, \(\theta\) replaces the phase vector in the arguments of \(F\),
and \(\mathrm{d}\theta\) is understood as
\(\mathrm{d}\theta_1\cdots\mathrm{d}\theta_n\). The factor \(2^n\) in
\eqref{eq:phase-vector-field} cancels against the factor
\((2\pi)^n\).

Fig.~\ref{fig:rational-independent-perturbation-trajectories} illustrates the
role of Assumption~\ref{ass:rationally-independent-frequencies} for $n=2$. Rationally dependent frequencies produce a closed periodic
trajectory that explores only a one-dimensional subset of the square. By
contrast, rationally independent relative frequencies produce a trajectory which is dense in
\([-1,1]^2\).

\begin{figure}[!t]
    \centering
    \hspace*{-0.2cm}\includegraphics[width=\linewidth]
    {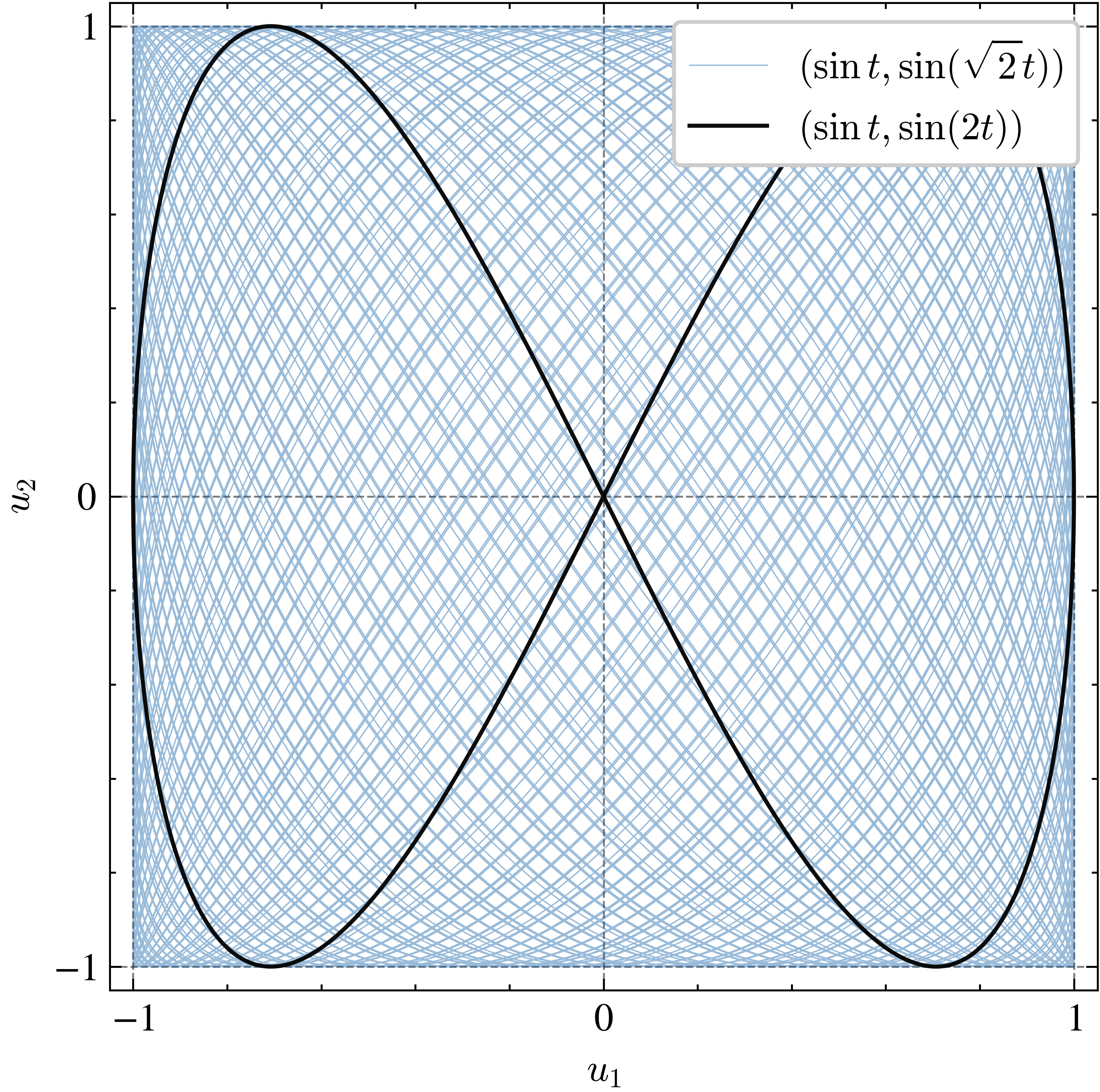}
    \caption{Spatial trajectories generated by two-dimensional sinusoidal
    perturbations. The black curve corresponds to the relative
    frequencies \(\hat\omega=(1,2)\) and forms a closed periodic orbit. The
    blue curve shows a finite time segment corresponding to the rationally
    independent relative frequencies
    \(\hat\omega=(1,\sqrt{2})\); its infinite time image is dense in
    \([-1,1]^2\).}
    \label{fig:rational-independent-perturbation-trajectories}
\end{figure}

For each fixed \(z\) and \(\tau_0\),
Theorem~\ref{thm:equidistribution-independent-phases} guarantees that
the limit in \eqref{eq:componentwise-average-limit} exists. The
following corollary upgrades this pointwise convergence to convergence
uniform in \(z\) on compact sets, the initial time, and any constant
phase offset.

\begin{corollary}[Uniform Kronecker--Weyl Averaging]
\label{cor:uniform-KW-average}
Let \(H:\mathbb{R}^m\times\mathbb{R}^r\to\mathbb{R}^q\) be continuous
and \(2\pi\)-periodic in each component of its second argument. Suppose
that \(\xi\in\mathbb{R}^r\) satisfies
\(
    \ell^\top\xi\neq0
\)
for every
\(
    \ell\in\mathbb{Q}^r\setminus\{0\},
\)
and define
\[
    \bar H(x)
    :=
    \frac{1}{(2\pi)^r}
    \int_{[0,2\pi]^r}H(x,u)\,\mathrm{d}u.
\]
Then, for every compact \(K\subset\mathbb{R}^m\),
\begin{equation}
    \frac{1}{T}
    \int_{\tau_0}^{\tau_0+T}
    H(x,\xi s+\varphi)\,\mathrm{d}s
    \longrightarrow
    \bar H(x)
    \quad\text{as }T\to\infty,
    \label{eq:uniform-KW-average}
\end{equation}
uniformly with respect to
\(x\in K\), \(\varphi\in[0,2\pi]^r\), and
\(\tau_0\in\mathbb{R}\).
\end{corollary}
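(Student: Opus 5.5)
The strategy is to reduce uniformity to the case of trigonometric polynomials, where everything can be computed explicitly. Then pass to general $H$ by uniform approximation.

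\textbf{Step 1: reduction to a compact set.} Fix a compact $K$. Since $H$ is continuous and $2\pi$-periodic in its second argument, it is uniformly continuous on $K\times[0,2\pi]^r$, and hence on $K\times\mathbb{R}^r$. By compactness of $K\times\mathbb{T}^r$ and the Stone--Weierstrass theorem, for every $\eta>0$ there is a function
\[
P(x,u)=\sum_{|k|_\infty\le N} c_k(x)e^{\mathrm{i}k^\top u},
\]
with $c_k$ continuous on $K$ and $k\in\mathbb{Z}^r$, such that $\|H-P\|<\eta$ uniformly on $K\times\mathbb{R}^r$. One way to obtain $P$ is to take Fej\'er means in $u$: the Fej\'er kernel converges uniformly for uniformly continuous periodic functions, uniformly in $x\in K$ by equicontinuity. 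The resulting coefficients $c_k(x)=(2\pi)^{-r}\int H(x,u)e^{-\mathrm{i}k^\top u}\,\mathrm{d}u$, times Fej\'er weights, are continuous in $x$ and bounded on $K$.

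\textbf{Step 2: explicit averages of trigonometric polynomials.} The averages of $P$ can be computed directly. The $k=0$ term gives exactly $c_0(x)=\bar P(x)$. For $k\neq0$, rational independence (applied with the integer vector $k$) gives $k^\top\xi\neq0$, so
\[
\left|\frac1T\int_{\tau_0}^{\tau_0+T}e^{\mathrm{i}k^\top(\xi s+\varphi)}\,\mathrm{d}s\right|\le\frac{2}{T|k^\top\xi|}.
\]
This bound is independent of $\tau_0$ and $\varphi$, since those only contribute a unimodular factor. Summing the finitely many terms yields
\[
\Bigl|\tfrac1T\int P-\bar P(x)\Bigr|\le \frac{2}{T}\sum_{0<|k|_\infty\le N}\frac{\sup_K|c_k|}{|k^\top\xi|}\to0,
\]
uniformly in $x,\varphi,\tau_0$.

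\textbf{Step 3: $\varepsilon/3$ argument.} For the final assembly, $|\frac1T\int (H-P)|\le\eta$ and $|\bar H-\bar P|\le\eta$. Combining these with Step 2 gives $\limsup_T\sup|\frac1T\int H-\bar H|\le2\eta$, and $\eta$ is arbitrary. Theorem~\ref{thm:equidistribution-independent-phases} is therefore not even needed; it is recovered as a special case. For vector-valued $H$, the argument is applied componentwise, taking real parts.

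The main obstacle is Step 1: producing an approximation that is uniform jointly in $x$ with a \emph{fixed} finite frequency set $N$. This is why I would use Fej\'er means rather than an abstract Stone--Weierstrass statement. Uniform convergence of Fej\'er means follows from the uniform modulus of continuity of $H(x,\cdot)$ over $x\in K$, and the same modulus controls all $x$ simultaneously.
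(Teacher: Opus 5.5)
Your argument is correct, but it takes a genuinely different route from the paper. The paper treats Theorem~\ref{thm:equidistribution-independent-phases} as a black box. It uses it to get pointwise convergence of $A_T(x,\psi)=\frac{1}{T}\int_0^T H(x,\xi\tau+\psi)\,\mathrm{d}\tau$ for each $(x,\psi)\in K\times[0,2\pi]^r$. It then shows that $\{A_T\}_{T>0}$ is equicontinuous there: the common shift $\xi\tau$ preserves distances, so the uniform modulus of continuity of $H$ on $K\times\mathbb{R}^r$ passes to every $A_T$. Pointwise convergence is upgraded to uniform convergence via the fact that an equicontinuous, pointwise convergent sequence on a compact set converges uniformly. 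Finally, $\tau_0$ and $\varphi$ are absorbed into the phase $\psi=\xi\tau_0+\varphi$ reduced modulo $2\pi$.

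You instead reprove the equidistribution itself in Weyl-criterion form. You approximate $H$ uniformly on $K\times\mathbb{R}^r$ by a trigonometric polynomial in $u$ with continuous coefficients, then integrate the exponentials exactly. Uniformity in $\tau_0$ and $\varphi$ is then automatic, since they contribute only unimodular factors. Uniformity in $x$ comes from the same uniform modulus of continuity that drives the paper's equicontinuity step.

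The paper's version is shorter given the cited theorem and uses no Fourier structure. Yours is self-contained and quantitative: the error is at most $2\eta+\frac{2}{T}\sum_{0<|k|_\infty\le N}\sup_K|c_k|/|k^\top\xi|$, with $N=N(\eta)$ fixed by the modulus of continuity of $H$. This makes explicit how small divisors $|k^\top\xi|$ slow the averaging, which the paper only notes qualitatively as the dependence of $\varepsilon^*$ on $\hat\omega$ in its implementation remark.

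One small correction to your last paragraph: the abstract Stone--Weierstrass theorem would have been enough. Every element of the algebra generated by the functions $c(x)e^{\mathrm{i}k^\top u}$, $c\in C(K)$, is already a finite sum with a fixed finite frequency set. Fej\'er means are just a concrete way to produce one, so the ``main obstacle'' you describe is not actually an obstacle.
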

%\vspace{1mm}

Applied with \(H=F\) and \(\xi=\hat\omega\),
Corollary~\ref{cor:uniform-KW-average} verifies the uniformity of the limit in \eqref{eq:general-average-limit} on compact state sets,
uniformly with respect to the initial time and any constant phase
offset, as required by Theorem~\ref{thm:Sanders-general-averaging}.

Under the fast-time transformation
\(\tau=\omega t\), the original initial time \(t_0\) corresponds to
\(\tau_0=\omega t_0\). Since the trajectory-closeness estimate is
uniform in \(\tau_0\), it may be evaluated at this value and then
returned to the original time variable. This gives the initial-time uniformity required
by Theorem~\ref{thm:practical-stability}. The uniformity in
\(\varphi\) also covers the fixed perturbation phases allowed in
Section~\ref{sec:design}, replacing each $\omega\hat\omega_i t$ with
$\omega\hat\omega_i t+\phi_i$.

\subsection{Integration by Parts}

We next evaluate \eqref{eq:ES-phase-average}. Fixing the other variables, the map
\[
    \theta_i
    \mapsto
    J\bigl(z+a\sin\theta\bigr)
\]
is Lipschitz, hence absolutely continuous, with derivative
\[
    a\,\partial_iJ\bigl(z+a\sin\theta\bigr)\cos\theta_i
\]
almost everywhere. Thus, the integration-by-parts formula for absolutely
continuous functions
\cite[Ch.~3, Exercise~35]{folland1999real} applies. Taking
\(U=J(z+a\sin\theta)\) and
\(\mathrm{d}V=\sin\theta_i\,\mathrm{d}\theta_i\) gives
\begin{multline}
    \int_0^{2\pi}
    J\bigl(z+a\sin\theta\bigr)
    \sin\theta_i\,\mathrm{d}\theta_i
    ={}\\
    a\int_0^{2\pi}
    \partial_iJ\bigl(z+a\sin\theta\bigr)
    \cos^2\theta_i\,\mathrm{d}\theta_i.
    \label{eq:phase-IBP}
\end{multline}
The boundary term is zero. By Fubini's theorem, we may apply
\eqref{eq:phase-IBP} to the inner \(\theta_i\)-integral in
\eqref{eq:ES-phase-average}. The factor \(a\) then cancels, giving
\begin{equation}
    \bar F_i(z)
    =
    -\frac{1}{\pi^n}
    \int_{[0,2\pi]^n}
    \partial_iJ\bigl(z+a\sin\theta\bigr)
    \prod_{j=1}^n\cos^2\theta_j
    \,\mathrm{d}\theta,
    \label{eq:average-after-IBP}
\end{equation}
where \(\partial_iJ\) is understood almost everywhere.

Crucially, the same term
\(\pi^{-n}\prod_{j=1}^n\cos^2\theta_j\geq0\) appears in every component, and the averaged field can already be considered a convexly weighted average of nearby gradients of $-J$.

\subsection{Spatial Coordinate Transformation}
We now make the coordinate transformation
\[u_i=\sin\theta_i, \] to reexpress the integral \eqref{eq:average-after-IBP} on
$u\in[-1,1]^n$. The one-dimensional identity underlying the transformation
is
\begin{equation}
    \int_0^{2\pi}
    h(\sin\theta)\cos^2\theta\,\mathrm{d}\theta
    =
    2\int_{-1}^{1}
    h(u)\sqrt{1-u^2}\,\mathrm{d}u
    \label{eq:one-dimensional-semicircle-change}
\end{equation}
for every integrable $h:[-1,1]\to\mathbb{R}$. To see this, periodicity
allows the left-hand integral to be taken over
$[-\pi/2,3\pi/2]$. Splitting this interval at $\pi/2$ produces one branch on
which sine increases from $-1$ to $1$ and one branch on which it decreases
from $1$ to $-1$. Applying $u=\sin\theta$ on the two branches gives the same
integral, producing the factor $2$ in
\eqref{eq:one-dimensional-semicircle-change}.

Applying this identity successively, for $2^n$ branches, to
$\theta_1,\ldots,\theta_n$ gives, for every
integrable $H:[-1,1]^n\to\mathbb{R}$, the $n$-dimensional identity:
\begin{multline}
    \int_{[0,2\pi]^n}
    H(\sin\theta)
    \prod_{j=1}^n\cos^2\theta_j
    \,\mathrm{d}\theta
    ={}\\
    2^n\int_{[-1,1]^n}
    H(u)
    \prod_{j=1}^n\sqrt{1-u_j^2}
    \,\mathrm{d}u,
    \label{eq:multidimensional-semicircle-change}
\end{multline}
where $\mathrm{d}u$ denotes
$\mathrm{d}u_1\cdots\mathrm{d}u_n$. 

Applying
\eqref{eq:multidimensional-semicircle-change} componentwise to
\eqref{eq:average-after-IBP} yields
\begin{equation}
    \bar F(z)
    =
    -\int_{[-1,1]^n}
    \nabla J(z+au)\kappa(u)\,\mathrm{d}u,
    \label{eq:average-common-kernel-u}
\end{equation}
where
\begin{equation}
    \kappa(u)
    :=
    \left(\frac{2}{\pi}\right)^n
    \prod_{j=1}^n\sqrt{1-u_j^2}.
    \label{eq:product-semicircle-kernel}
\end{equation}
The kernel is nonnegative and has unit mass because
\begin{equation}
    \int_{-1}^1\sqrt{1-v^2}\,\mathrm{d}v
    =
    \frac{\pi}{2}.
    \label{eq:semicircle-unit-mass}
\end{equation}
Thus, \(\bar F\) is exactly a convexly weighted average of the
almost-everywhere descent directions \(-\nabla J\) throughout
\(z+a[-1,1]^n\).

Now we show that the interpretation is equivalent to the gradient flow of a smoothed version of the objective function. Define the smoothed objective
\begin{equation}
    J_a(x)
    :=
    \int_{[-1,1]^n}
    J(x+au)\kappa(u)\,\mathrm{d}u.
    \label{eq:smoothed-objective}
\end{equation}
By Proposition~\ref{prop:diff-under-integral} in Appendix~\ref{app:diff-under-integral}, $J_a$ is differentiable,
\begin{equation}
    \nabla J_a(x)
    =
    \int_{[-1,1]^n}
    \nabla J(x+au)\kappa(u)\,\mathrm{d}u.
    \label{eq:gradient-smoothed-objective}
\end{equation}
Comparing \eqref{eq:average-common-kernel-u} and
\eqref{eq:gradient-smoothed-objective} gives
\begin{equation}
    \bar F(z)=-\nabla J_a(z).
    \label{eq:average-is-gradient}
\end{equation}
Moreover, \(\nabla J_a=-\bar F\) is locally Lipschitz by
\eqref{eq:ES-phase-average} and the local Lipschitz continuity of \(J\).

The average dynamics in fast time with $\tau = \omega t$ are
\begin{equation}
    \frac{\mathrm{d}z}{\mathrm{d}\tau}
    =
    -\varepsilon\nabla J_a(z).
    \label{eq:averaged-gradient-flow-fast-time}
\end{equation}
Equivalently, in slow time
\[
    s:=\varepsilon\tau=kt,
    \qquad
    s_0:=\varepsilon\tau_0=kt_0,
\]
the average system is the gradient flow
\begin{equation}
    \frac{\mathrm{d}z}{\mathrm{d}s}
    =
    -\nabla J_a(z).
    \label{eq:averaged-gradient-flow-slow-time}
\end{equation}

\section{Main Result}
\label{sec:main-result}

The stability requirement is now imposed directly on the average dynamics
identified above.

\begin{assumption}[GUAS of Average Dynamics]
\label{ass:average-system-GUAS}
For the selected amplitude \(a>0\), \(x_a\in\mathbb{R}^n\) is a GUAS equilibrium of \eqref{eq:averaged-gradient-flow-slow-time}.
\end{assumption}
%\vspace{1mm}

Assumption~\ref{ass:average-system-GUAS} is deliberately stated at the level
needed for applying Theorem~\ref{thm:practical-stability}. In particular, it does not require
$J_a$ to be coercive. It permits, for example, a well-shaped objective that
approaches a finite value and becomes flat at infinity, provided its gradient
flow is nevertheless GUAS. It also allows undesired stationary points of the
original objective to be removed by averaging at the selected amplitude $a$. Several familiar conditions imply Assumption
\ref{ass:average-system-GUAS}. If $J$ is strongly convex, then the weighted
average $J_a$ is strongly convex, and its gradient flow is GUAS at its unique
minimizer. More generally, the outward-pointing condition
\begin{equation}
    (x-x_a)^\top\nabla J_a(x)
    \geq
    \alpha\bigl(\|x-x_a\|\bigr),
    \label{eq:average-outward-pointing}
\end{equation}
where $\alpha$ is continuous and positive definite, proves GUAS using
$\|x-x_a\|^2$ as a Lyapunov function. Strong convexity with parameter
$\mu>0$ implies \eqref{eq:average-outward-pointing} with
$\alpha(r)=\mu r^2$.

Corollary~\ref{cor:uniform-KW-average} and
Theorem~\ref{thm:Sanders-general-averaging} imply that, for every compact
set of initial conditions and every \(L,d>0\),
\begin{equation}
    \|\hat x(\tau)-z(\tau)\|<d,
    \qquad
    \tau_0\leq\tau\leq\tau_0+\frac{L}{\varepsilon},
    \label{eq:ES-fast-time-closeness}
\end{equation}
for all sufficiently small \(\varepsilon\), uniformly with respect to
\(\tau_0\). In slow time \(s=\varepsilon\tau\), this is precisely the
trajectory-convergence condition of
Theorem~\ref{thm:practical-stability}. The local Lipschitz properties
established above satisfy the remaining regularity conditions, yielding
the following result under
Assumption~\ref{ass:average-system-GUAS}.

\begin{theorem}[Main Result]
\label{thm:main-ES-result}
Suppose Assumptions~\ref{ass:objective-lipschitz},
\ref{ass:rationally-independent-frequencies}, and
\ref{ass:average-system-GUAS} hold. Then $x_a$ is PGUAS with respect to $\varepsilon = k / \omega$ for \eqref{eq:proposed-ES-vector} in slow time $s = k t$. In particular, there exists
$\beta\in\mathcal{KL}$ such that, for every $\Delta,\nu>0$, there exists
$\varepsilon^*(\Delta,\nu)>0$ for which
\begin{equation}
    0<\varepsilon<\varepsilon^*(\Delta,\nu)
    \label{eq:main-epsilon-condition}
\end{equation}
implies that every solution such that 
$\|\hat x(t_0)-x_a\|\leq\Delta$ satisfies
\begin{equation}
    \|\hat x(t)-x_a\|
    \leq
    \beta\bigl(
    \|\hat x(t_0)-x_a\|,
    k (t-t_0)
    \bigr)
    +\nu, \quad
    t\geq t_0.
    \label{eq:main-SPUAS-bound}
\end{equation}
\end{theorem}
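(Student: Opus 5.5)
The plan is to apply Theorem~\ref{thm:practical-stability} in slow time $s=kt=\varepsilon\tau$, with the error coordinates $e:=\hat x-x_a$ and $\zeta:=z-x_a$ so that the equilibrium sits at the origin. In slow time, \eqref{eq:ES-fast-time} becomes
\[
\frac{\mathrm{d}\hat x}{\mathrm{d}s}=F\bigl(\hat x,\hat\omega s/\varepsilon\bigr).
\]
I will therefore set
\[
f(s,e,\varepsilon):=F\bigl(e+x_a,\hat\omega s/\varepsilon\bigr),\qquad g(s,\zeta):=-\nabla J_a(\zeta+x_a).
\]
The limiting system $\mathrm{d}\zeta/\mathrm{d}s=g(s,\zeta)$ is autonomous, and Assumption~\ref{ass:average-system-GUAS} says its origin is GUAS.

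\textbf{Condition~1 (regularity).} Fix $\varepsilon$. Continuity of $f$ follows from continuity of $J$ and of the trigonometric factors. On a compact set of $e$ values, $J$ is Lipschitz on the compact set obtained by enlarging it by $a[-1,1]^n$. The demodulator is bounded by $2^n/a$. Hence $f$ is Lipschitz in $e$ uniformly in $s$. For $g$, I use the fact, noted after \eqref{eq:average-is-gradient}, that $\nabla J_a=-\bar F$ is locally Lipschitz. This follows by bounding the difference of the integrands in \eqref{eq:ES-phase-average} with the local Lipschitz constant of $J$. Continuity of $g$ is immediate from this.

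\textbf{Condition~2 (trajectory convergence).} This is the main step. Fix $T>0$, a compact set $K$ of initial errors, and $d>0$. GUAS of the limit system gives forward completeness and the bound $\|\zeta(s)\|\le\beta(\max_K\|\zeta_0\|,0)=:R$. So all averaged trajectories starting in $K$ stay in the ball of radius $R$, which is an interior compact subset of $D=\mathbb{R}^n$.

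Next I translate to fast time with $L=T$ and $\tau_0=s_0/\varepsilon$. Corollary~\ref{cor:uniform-KW-average}, applied with $H=F$ and $\xi=\hat\omega$, supplies the required uniformity of the average. This uniformity holds in the state on compact sets and in $\tau_0$, the latter through the phase shift $\varphi=\hat\omega\tau_0 \bmod 2\pi$. Theorem~\ref{thm:Sanders-general-averaging} then yields $\|\hat x(\tau)-z(\tau)\|=O(\sqrt{\delta_1(\varepsilon)})$ on $[\tau_0,\tau_0+T/\varepsilon]$. This bound is uniform in $\tau_0\in\mathbb{R}$ and in $x_0\in K+x_a$. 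Choosing $\bar\varepsilon$ so that this bound is below $d$ gives \eqref{eq:uniform-trajectory-convergence} in slow time.

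One point must be handled carefully: the statement of Theorem~\ref{thm:Sanders-general-averaging} must also deliver existence of $\hat x$ on the whole interval. I plan to argue this as follows. Take the compact set to be the closed $(R+1)$-ball. As long as $\hat x$ stays within distance $d<1$ of $z$, it remains in this ball, where the vector field is bounded. A standard continuation argument then rules out escape before time $T$. The uniform-in-$\tau_0$ claim is exactly the time-translation extension already discussed after Theorem~\ref{thm:Sanders-general-averaging}.

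\textbf{Conclusion.} With both conditions verified, Theorem~\ref{thm:practical-stability} gives PGUAS of the origin for $e$ in slow time. Proposition~\ref{prop:moreau-KL-bound} supplies the $\mathcal{KL}$ form of Definition~\ref{def:PGUAS}. Translating back via $e=\hat x-x_a$ and $s-s_0=k(t-t_0)$ gives \eqref{eq:main-SPUAS-bound}. The main obstacle is the uniformity bookkeeping in Condition~2. The averaging error must be uniform simultaneously in the initial time, which the slow-to-fast change $\tau_0=s_0/\varepsilon$ makes $\varepsilon$-dependent, and in the initial state. This is why I rely on the uniform-in-$\tau_0$ form of Theorem~\ref{thm:Sanders-general-averaging} rather than on pointwise Kronecker--Weyl limits.
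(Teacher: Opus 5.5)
Your proposal is correct and follows the paper's proof essentially step for step. You reparameterize to slow time $s=kt$, check the regularity condition through the local Lipschitz continuity of $F$ (uniformly in the phase) and of $\nabla J_a=-\bar F$, get finite-horizon trajectory closeness from Corollary~\ref{cor:uniform-KW-average} and Theorem~\ref{thm:Sanders-general-averaging} uniformly in $\tau_0=s_0/\varepsilon$, and conclude with Theorem~\ref{thm:practical-stability} after translating $x_a$ to the origin. Your explicit bound $R=\beta(\max_K\|\zeta_0\|,0)$ and the continuation argument for existence of $\hat x$ on the whole horizon supply details the paper leaves implicit; to keep the continuation non-circular, apply the averaging estimate to $F$ cut off outside a larger ball and observe that the resulting solution never leaves the $(R+1)$-ball.
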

%\vspace{1mm}

\begin{proof}
Let
\[
    s=\varepsilon\tau=kt,
    \qquad
    s_0=\varepsilon\tau_0=kt_0,
\]
and define the slow-time reparameterizations, where $\hat x(t)$ and $z(\tau)$ are the solutions to \eqref{eq:proposed-ES-vector} and \eqref{eq:ES-average-system} respectively,
\[
    \widetilde x(s):=\hat x(s/k),
    \qquad
    \widetilde z(s):=z(s/\varepsilon).
\]
Then
\[
    \frac{\mathrm{d}\widetilde x}{\mathrm{d}s}
    =
    F\left(
        \widetilde x,
        \frac{\hat\omega s}{\varepsilon}
    \right),
    \qquad
    \frac{\mathrm{d}\widetilde z}{\mathrm{d}s}
    =
    -\nabla J_a(\widetilde z),
\]
with
\(
\widetilde x(s_0)=\widetilde z(s_0)=x_0.
\)

Under \(s=\varepsilon\tau\), the fast-time estimate
\eqref{eq:ES-fast-time-closeness} becomes
\[
    \|\widetilde x(s)-\widetilde z(s)\|<d,
    \qquad
    s_0\leq s\leq s_0+L.
\]
For every compact set of initial conditions and every \(L,d>0\), this
estimate holds for all sufficiently small \(\varepsilon\), uniformly
with respect to \(s_0\), because
\eqref{eq:ES-fast-time-closeness} is uniform with respect to
\(\tau_0=s_0/\varepsilon\). Thus, the finite-horizon
trajectory-convergence condition of
Theorem~\ref{thm:practical-stability} holds in slow time.

The existence-and-uniqueness condition of
Theorem~\ref{thm:practical-stability} is satisfied because
\(F(x,\theta)\) is continuous and locally Lipschitz in \(x\), uniformly
in \(\theta\), while \(\nabla J_a\) is locally Lipschitz. The limiting
system is GUAS at \(x_a\) by
Assumption~\ref{ass:average-system-GUAS}. Applying
Theorem~\ref{thm:practical-stability} after translating \(x_a\) to the
origin gives
\[
    \|\widetilde x(s)-x_a\|
    \leq
    \beta\bigl(\|x_0-x_a\|,s-s_0\bigr)+\nu.
\]
Since \(s-s_0=k(t-t_0)\), returning to original time gives
\eqref{eq:main-SPUAS-bound}.
\end{proof}
\begin{remark}
\label{rem:smoothing-bias}
The point \(x_a\) in Theorem~\ref{thm:main-ES-result} is the equilibrium of the gradient flow of the smoothed objective and need not coincide exactly with a minimizer of \(J\). If $J$ is coercive and has a unique minimizer $x^*$, then
$J_a\to J$ locally uniformly as $a\to0$. Whenever Assumption
\ref{ass:average-system-GUAS} holds along the selected sequence of
amplitudes, the corresponding minimizers satisfy $x_a\to x^*$. Thus $a$
controls the bias to the point of convergence, while $\varepsilon=k/\omega$ controls the
averaging error.
\end{remark}

\begin{remark}
There are three things worth mentioning regarding implementation. 1) Square roots of distinct primes provide a simple choice satisfying
Assumption~\ref{ass:rationally-independent-frequencies}. In practice,
the relative frequencies should be chosen to be well separated, since
nearly equal frequencies may delay exploration of the perturbation signal.
Theorem~\ref{thm:main-ES-result} is qualitative in this regard, since \(\varepsilon^*\) may depend on
\(\hat\omega\). 2) Finite-precision hardware replaces
irrational $\hat \omega$ by rational approximations, so the implemented
perturbation is periodic rather than truly dense. Even with exactly rationally
independent frequencies, however, density is an infinite-time property: no perturbation trajectory densely fills the exploration region in finite time.
The finite-time trajectory approximation
\eqref{eq:ES-fast-time-closeness} relies on the finite-time averages appearing
in \eqref{eq:componentwise-average-limit} becoming sufficiently close to the
phase average \eqref{eq:ES-phase-average} over fast-time intervals during
which the parameter estimate evolves slowly. So, rational frequency
approximations with sufficiently good finite-time coverage and long recurrence
periods of $S(t)$ can be expected to produce similar practical averaging behavior.
3) The magnitude of each demodulation channel in
\eqref{eq:design-demodulator} scales as \(2^n/a\), which may amplify
measurement noise in higher dimensions. Since \(k\) multiplies the
demodulated signal, this effect can be moderated by reducing \(k\), at the
cost of slower adaptation; slower practical convergence is also expected
because the perturbation signal must explore a higher-dimensional region.
\end{remark}

\section{Alternative Perturbation and Demodulation Signals}
\label{sec:demodulation-discussion}
\subsection{General Perturbation and Demodulation Signals}
\label{subsec:general-perturbation-demodulation}

We next extend the sinusoidal design to general matched perturbation and
demodulation signals. The signals in
\eqref{eq:design-perturbation}--\eqref{eq:design-demodulator} produce the
product-semicircle kernel \eqref{eq:product-semicircle-kernel}; here, they
are replaced by a general pair designed to produce a prescribed kernel.

Let \(\phi:\mathbb{R}\to[-1,1]\) be Lipschitz and \(2\pi\)-periodic, and
choose \(\hat\omega_1,\ldots,\hat\omega_n\) satisfying
Assumption~\ref{ass:rationally-independent-frequencies}. Define
\begin{equation}
\begin{aligned}
    \theta_i(t)
    &:={}
    \omega\hat\omega_i t,\\
    u_i(t)
    &:={}
    \phi\bigl(\theta_i(t)\bigr),
    \qquad i=1,\ldots,n,
\end{aligned}
\label{eq:general-perturbation-coordinates}
\end{equation}
and write
\[
    u(t)
    :=
    \bigl(u_1(t),\ldots,u_n(t)\bigr)^\top.
\]
The generalized perturbation and demodulation signals are
\begin{equation}
\begin{aligned}
    S_i(t)
    &:={}
    a u_i(t)
    =a\phi\bigl(\theta_i(t)\bigr),\\
    M_i(t)
    &:={}
    \frac{1}{a}m_i\bigl(u(t)\bigr),
    \qquad i=1,\ldots,n,
\end{aligned}
\label{eq:general-perturbation-demodulation-signals}
\end{equation}
where each \(m_i:[-1,1]^n\to\mathbb{R}\) is Lipschitz. With these
signals, the modified extremum seeking dynamics retain the form
\begin{equation}
    \dot{\hat x}(t)
    =
    -kJ\bigl(\hat x(t)+S(t)\bigr)M(t).
\label{eq:general-perturbation-ES}
\end{equation}

To describe the spatial averaging induced by \(\phi\), suppose that there
exists an integrable function
\(\rho:[-1,1]^n\to\mathbb{R}_{\geq 0}\) such that
\begin{equation}
    \frac{1}{(2\pi)^n}
    \int_{[0,2\pi]^n}
    h\bigl(\phi(\theta_1),\ldots,\phi(\theta_n)\bigr)
    \,\mathrm{d}\theta =
    \int_{[-1,1]^n}
    h(u)\rho(u)\,\mathrm{d}u
\label{eq:general-perturbation-density}
\end{equation}
for every Lipschitz function
\(h:[-1,1]^n\to\mathbb{R}\). Taking \(h\equiv 1\) in
\eqref{eq:general-perturbation-density} gives
\[
    \int_{[-1,1]^n}\rho(u)\,\mathrm{d}u=1.
\]

Let \(\kappa:[-1,1]^n\to\mathbb{R}_{\geq 0}\) be a continuous kernel
satisfying
\begin{equation}
    \int_{[-1,1]^n}\kappa(u)\,\mathrm{d}u=1.
\label{eq:general-matched-kernel-mass}
\end{equation}
Suppose that \(\kappa\) is absolutely continuous in each coordinate, that
its partial derivatives are integrable, and that
\begin{equation}
    \kappa(u)=0
    \quad\text{whenever}\quad
    u_i\in\{-1,1\}
\label{eq:general-matched-kernel-boundary}
\end{equation}
for every \(i=1,\ldots,n\). Finally, suppose that the demodulation
functions and the perturbation density satisfy
\begin{equation}
    m_i(u)\rho(u)
    =
    -\partial_i\kappa(u),
    \qquad i=1,\ldots,n,
\label{eq:general-perturbation-kernel-matching}
\end{equation}
for almost every \(u\in[-1,1]^n\).

For the kernel \(\kappa\), define the smoothed objective
\begin{equation}
    J_{a,\kappa}(x)
    :={}
    \int_{[-1,1]^n}
    J(x+au)\kappa(u)\,\mathrm{d}u.
\label{eq:general-smoothed-objective}
\end{equation}

\begin{theorem}[General Perturbation and Demodulation]
\label{thm:general-perturbation-demodulation}
Suppose that Assumptions~\ref{ass:objective-lipschitz} and~
\ref{ass:rationally-independent-frequencies} hold. Let
\(\phi:\mathbb{R}\to[-1,1]\) be Lipschitz and \(2\pi\)-periodic, and
let the signals \(S\) and \(M\) be defined by
\eqref{eq:general-perturbation-coordinates} and
\eqref{eq:general-perturbation-demodulation-signals}, where each
\(m_i:[-1,1]^n\to\mathbb{R}\) is Lipschitz.

Suppose that there exists a nonnegative integrable function
\(\rho:[-1,1]^n\to\mathbb{R}_{\geq 0}\) satisfying
\eqref{eq:general-perturbation-density} for every Lipschitz function
\(h:[-1,1]^n\to\mathbb{R}\). Let
\(\kappa:[-1,1]^n\to\mathbb{R}_{\geq 0}\) be continuous and satisfy
the unit-mass condition \eqref{eq:general-matched-kernel-mass} and the
boundary condition \eqref{eq:general-matched-kernel-boundary}. Suppose
also that \(\kappa\) is absolutely continuous in each coordinate, that
its almost-everywhere-defined partial derivatives are integrable, and
that the matching condition
\eqref{eq:general-perturbation-kernel-matching} holds. Define
\(J_{a,\kappa}\) by \eqref{eq:general-smoothed-objective}. Then the
following statements hold.

\begin{enumerate}
    \item For every \(t_0\in\mathbb{R}\), the following limit exists:
    \begin{equation}
    \begin{aligned}
        &\lim_{T\to\infty}
        \frac{1}{T}
        \int_{t_0}^{t_0+T}
        J\bigl(x+S(t)\bigr)M(t)\,\mathrm{d}t\\
        &\qquad =
        \int_{[-1,1]^n}
        \nabla J(x+au)\kappa(u)\,\mathrm{d}u\\
        &\qquad =
        \nabla J_{a,\kappa}(x),
    \end{aligned}
    \label{eq:general-perturbation-long-time-average}
    \end{equation}
    where \(\nabla J\) is understood almost everywhere. The convergence is uniform with respect to \(x\) on compact subsets of
\(\mathbb{R}^n\) and \(t_0\in\mathbb{R}\).

    \item Suppose, in addition, that \(x_{a,\kappa}\in\mathbb{R}^n\)
    is a GUAS equilibrium of the gradient flow
    \begin{equation}
        \frac{\mathrm{d}z}{\mathrm{d}s}
        =
        -\nabla J_{a,\kappa}(z).
    \label{eq:general-perturbation-gradient-flow}
    \end{equation}
    Then the conclusion of Theorem~\ref{thm:main-ES-result} holds for
    \eqref{eq:general-perturbation-ES}, with \(x_a\) replaced by
    \(x_{a,\kappa}\). In particular, \(x_{a,\kappa}\) is PGUAS with respect to $\varepsilon = k / \omega$, and the bound
    \eqref{eq:main-SPUAS-bound} holds with \(x_a\) replaced by
    \(x_{a,\kappa}\).
\end{enumerate}
\end{theorem}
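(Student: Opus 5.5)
The plan is to reduce statement 1 to three ingredients already used for the sinusoidal design: Corollary~\ref{cor:uniform-KW-average} for the long-time average, the density identity \eqref{eq:general-perturbation-density} to pass to spatial coordinates, and a coordinatewise integration by parts against \(\kappa\). Statement 2 will then follow by repeating the proof of Theorem~\ref{thm:main-ES-result} verbatim.

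\textbf{Step 1 (time average to phase average).} Fix \(i\) and define
\[
H_i(x,\theta):=\frac{1}{a}J\bigl(x+a\phi(\theta)\bigr)\,m_i\bigl(\phi(\theta)\bigr),
\]
with \(\phi\) acting componentwise. Since \(J\), \(\phi\), \(m_i\) are continuous and \(\phi\) is \(2\pi\)-periodic, \(H_i\) is continuous and \(2\pi\)-periodic in each phase. The vector \(\xi:=\omega\hat\omega\) is rationally independent whenever \(\hat\omega\) is, since \(\omega>0\). The integrand in \eqref{eq:general-perturbation-long-time-average} is \(H_i(x,\xi t)\). Corollary~\ref{cor:uniform-KW-average} therefore gives convergence of the windowed average to \((2\pi)^{-n}\int_{[0,2\pi]^n}H_i(x,\theta)\,\mathrm{d}\theta\), uniformly in \(x\) on compacts and in \(t_0\).

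For fixed \(x\), the map \(h(u):=J(x+au)m_i(u)\) is Lipschitz on \([-1,1]^n\) as a product of bounded Lipschitz functions. Hence \eqref{eq:general-perturbation-density}, followed by the matching condition \eqref{eq:general-perturbation-kernel-matching}, turns the phase average into
\[
\frac{1}{a}\int_{[-1,1]^n}J(x+au)m_i(u)\rho(u)\,\mathrm{d}u=-\frac{1}{a}\int_{[-1,1]^n}J(x+au)\,\partial_i\kappa(u)\,\mathrm{d}u .
\]

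\textbf{Step 2 (integration by parts in \(u_i\)).} By Fubini, I will integrate first in \(u_i\) for almost every fixed \(u_{-i}\). For such \(u_{-i}\), the map \(u_i\mapsto J(x+au)\) is Lipschitz with derivative \(a\,\partial_iJ(x+au)\) a.e., and \(u_i\mapsto\kappa(u)\) is absolutely continuous by hypothesis. Their product is therefore absolutely continuous on \([-1,1]\), and the integration-by-parts formula \cite[Ch.~3, Exercise~35]{folland1999real} applies. The boundary term vanishes by \eqref{eq:general-matched-kernel-boundary}, and the factor \(a\) cancels, giving
\[
\int_{[-1,1]^n}\partial_iJ(x+au)\kappa(u)\,\mathrm{d}u .
\]
Integrability is automatic: \(\partial_iJ\) is essentially bounded on the compact set \(x+a[-1,1]^n\), and \(\kappa\) is continuous.

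\textbf{Step 3 (identify the gradient).} The identification with \(\nabla J_{a,\kappa}\) comes from Proposition~\ref{prop:diff-under-integral}. If that proposition is stated only for the semicircle kernel, I will rerun its argument for a general bounded, nonnegative, unit-mass \(\kappa\). The difference quotients of \(J(\cdot+au)\) are bounded by the local Lipschitz constant, and they converge for a.e. \(u\) at each fixed \(x\), because Rademacher's theorem gives differentiability at almost every \(x+au\). Dominated convergence then passes the limit under the integral. I expect this step, together with the measure-theoretic bookkeeping in Step 2, to be the main point requiring care. The delicate issues are that \(\kappa\) is only coordinatewise absolutely continuous, that Fubini requires joint measurability and integrability of \(J\,\partial_i\kappa\), and that the a.e.\ partial derivative along lines must coincide with the a.e.\ gradient component.

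\textbf{Step 4 (statement 2).} I will rescale to fast time \(\tau=\omega t\) with \(\varepsilon=k/\omega\), so that the right-hand side reads \(\varepsilon F(\hat x,\hat\omega\tau)\) with \(F=-(H_1,\ldots,H_n)\). This field is continuous and locally Lipschitz in \(x\) uniformly in \(\theta\), because \(m_i\circ\phi\) is bounded. Step 1 supplies the uniform average required by Theorem~\ref{thm:Sanders-general-averaging}.

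The averaged field \(-\nabla J_{a,\kappa}\) is locally Lipschitz because it equals the phase average of \(F\), which inherits the local Lipschitz constant of \(J\). This gives the existence-and-uniqueness condition of Theorem~\ref{thm:practical-stability}. The slow-time argument in the proof of Theorem~\ref{thm:main-ES-result} then carries over unchanged with \(x_a\) replaced by \(x_{a,\kappa}\), and the GUAS hypothesis on \eqref{eq:general-perturbation-gradient-flow} yields PGUAS and the bound \eqref{eq:main-SPUAS-bound}.
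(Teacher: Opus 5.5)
Your proposal is correct and follows the paper's proof essentially step for step: Corollary~\ref{cor:uniform-KW-average} gives the uniform long-time average, the density identity and matching condition move the integral to spatial coordinates, integration by parts in $u_i$ uses \eqref{eq:general-matched-kernel-boundary} to kill the boundary term, Proposition~\ref{prop:diff-under-integral} identifies $\nabla J_{a,\kappa}$, and the slow-time argument of Theorem~\ref{thm:main-ES-result} yields PGUAS. Two small remarks: Proposition~\ref{prop:diff-under-integral} is already stated for an arbitrary compact $K$ and nonnegative unit-mass $\kappa$, so you do not need to rerun it; and your explicit check that $u\mapsto J(x+au)m_i(u)$ is Lipschitz, which is what lets you apply \eqref{eq:general-perturbation-density}, is a detail the paper leaves implicit.
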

%\vspace{1mm}

The theorem follows by repeating the basic analysis steps outlined in
Section~\ref{sec:analysis}, and the proof is included in Appendix~\ref{app:proof-general-design}.

\subsection{Interpretation of the Sinusoidal Design}
\label{subsec:sinusoidal-occupation-density}

The design is particularly transparent if the Kronecker--Weyl average is
first transformed into the spatial coordinates
\[
    u_i=\sin\theta_i,
    \qquad i=1,\ldots,n
\]
and integration by parts is then performed in these coordinates. For every
continuous \(h:[-1,1]^n\to\mathbb{R}\), Theorem~\ref{thm:equidistribution-independent-phases} gives
\begin{align}
\lim_{T\to\infty}
\frac{1}{T}\int_0^T
h\bigl(\sin(\hat\omega\tau)\bigr)
\,\mathrm{d}\tau
& = 
\frac{1}{(2\pi)^n}
\int_{[0,2\pi]^n}
h(\sin\theta)\,\mathrm{d}\theta
 \nonumber  \\&= 
\int_{[-1,1]^n}
h(u)\rho_A(u)\,\mathrm{d}u.
\label{eq:sinusoidal-long-time-average}
\end{align}
where
\begin{equation}
\rho_A(u)
=
\frac{1}{\pi^n}
\prod_{j=1}^n
\frac{1}{\sqrt{1-u_j^2}},
\qquad u\in(-1,1)^n.
\label{eq:arcsine-occupation-density}
\end{equation}
This density results from splitting each phase interval into the two
monotone branches of the sine function and then changing coordinates from
\(\theta\) to \(u\). Thus, the sinusoidal perturbation induces the
product-arcsine density \(\rho_A\) on \([-1,1]^n\). This particular density is
determined entirely by the perturbation and is present before a demodulation
signal is selected.

For the proposed demodulation, the corresponding spatial function is
\begin{equation}
    m_i(u)
    =
    2^n u_i
    \prod_{j\neq i}(1-u_j^2).
    \label{eq:spatial-selected-demodulator}
\end{equation}
Combining this expression with
\eqref{eq:arcsine-occupation-density} gives
\begin{align}
    m_i(u)\rho_A(u)
    &=
    \left(\frac{2}{\pi}\right)^n
    \frac{u_i}{\sqrt{1-u_i^2}}
    \prod_{j\neq i}\sqrt{1-u_j^2}
    \nonumber\\
    &=
    -\partial_i\kappa(u),
    \label{eq:sinusoidal-matching-check}
\end{align}
where \(\kappa\) is the product-semicircle kernel in
\eqref{eq:product-semicircle-kernel}. Hence, the matching condition
\eqref{eq:general-perturbation-kernel-matching} holds for every component.

The cross-coordinate cosine-squared factors in the demodulator are precisely
what converts the product-arcsine occupation density into the derivative of
the same product-semicircle kernel for every component. This common kernel
makes the averaged field the gradient of a single smoothed objective.

With the same sinusoidal perturbation, one may also choose another
admissible kernel \(\widetilde\kappa\) whenever
\[
    \widetilde m_i(u)
    =
    -\frac{\partial_i\widetilde\kappa(u)}{\rho_A(u)}
\]
can be defined at the boundary so that each \(\widetilde m_i\) is Lipschitz
on \([-1,1]^n\). The product-semicircle
kernel is particularly convenient because it produces the elementary
time-domain demodulator in \eqref{eq:design-demodulator}.

\begin{figure*}[!t]
    \centering
    \includegraphics[width=0.9\textwidth]
    {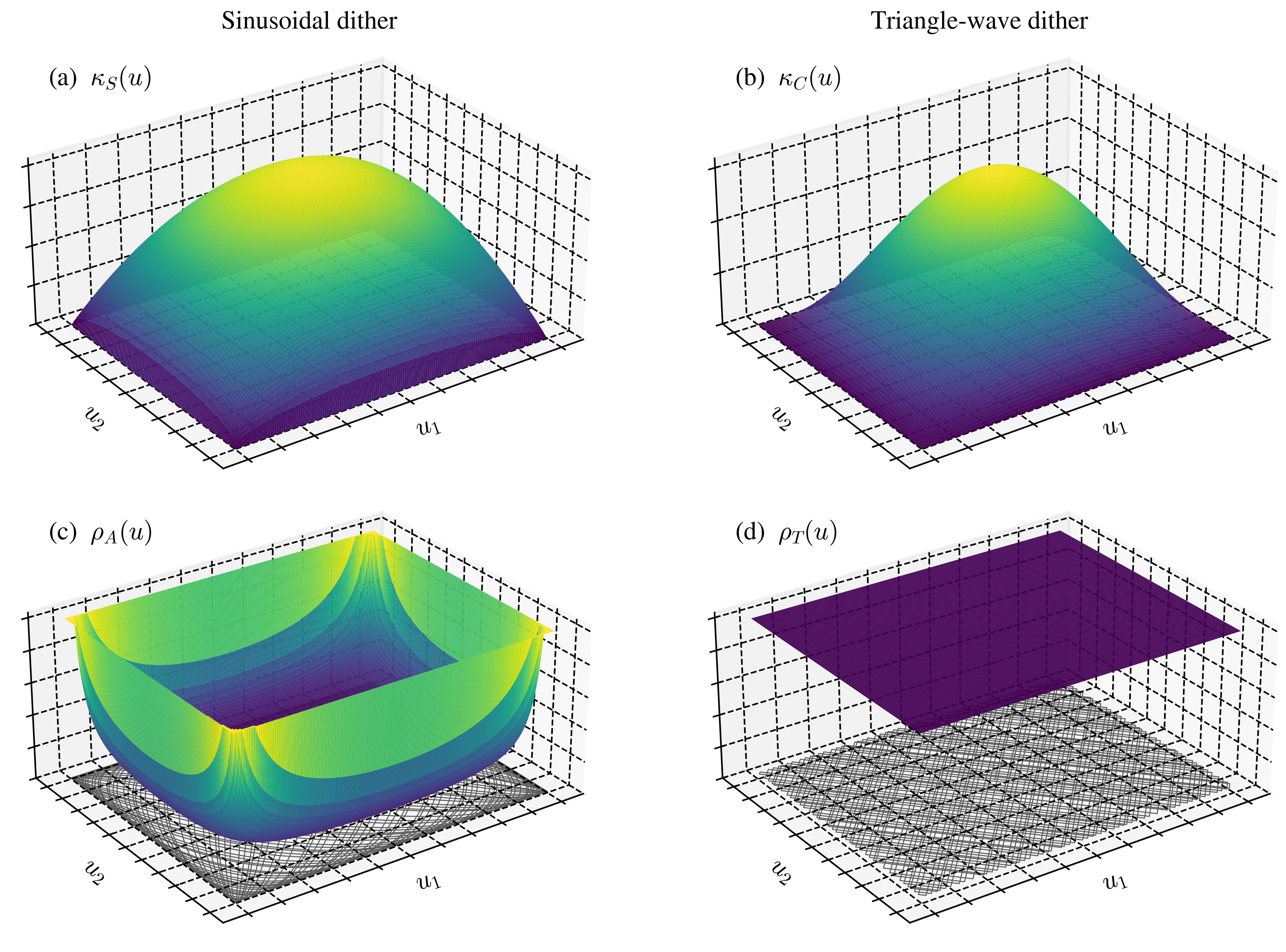}
\caption{Smoothing kernels (top) and occupation densities (bottom) for
two-dimensional sinusoidal (left) and triangle-wave (right) perturbations.
Black curves (bottom) show perturbation trajectories with rationally
independent frequencies \(\hat\omega=(1,\sqrt{2})\) which induce $\rho_A$ and $\rho_T$, respectively.}
    \label{fig:perturbation-density-kernel-comparison}
\end{figure*}

\subsection{Triangle Wave Example}
\label{subsec:other-perturbation-signals}

The same principle can guide the use of a different perturbation. First, use its
long-time average to determine its spatial density \(\rho(u)\) on the sampled
region. Next, choose a nonnegative, unit-mass smoothing kernel \(\kappa(u)\)
that vanishes on the boundary. Wherever \(\rho(u)>0\), select the spatial
demodulator according to
\begin{equation}
    m_i(u)\rho(u)
    =
    -\partial_i\kappa(u).
    \label{eq:general-occupation-matching}
\end{equation}
The resulting expression must then be converted back to a realizable
time-domain signal. This construction requires the perturbation to possess a
suitable spatial density and the ratio in
\eqref{eq:general-occupation-matching} to remain bounded and well defined.

A useful example is the triangle wave. We define
\begin{equation}
\phi(\theta)
:=
\frac{2}{\pi}\arcsin(\sin\theta),
\qquad
u_i=\phi(\theta_i).
\label{eq:normalized-triangle-wave}
\end{equation}
Following a procedure similar to that used to derive
\eqref{eq:multidimensional-semicircle-change}, shift each phase interval
to \([-\pi/2,3\pi/2]\) and split it into two monotone branches. Since
\(\lvert\mathrm{d}u_i\rvert=(2/\pi)\lvert\mathrm{d}\theta_i\rvert\), the
phase integral converts directly to the \(u\) coordinates. For rationally
independent component frequencies, the joint density is
\begin{equation}
\rho_T(u)=\frac{1}{2^n},
\qquad u\in(-1,1)^n.
\label{eq:triangle-wave-density}
\end{equation}
The matching condition
\eqref{eq:general-occupation-matching} consequently reduces to
\begin{equation}
m_i(u)
=
-2^n\partial_i\kappa(u).
\label{eq:triangle-wave-matching}
\end{equation}
Thus, kernels with bounded partial derivatives immediately produce
bounded triangle-wave demodulators.

For example, consider the product quadratic kernel
\begin{equation}
\kappa_Q(u)
=
\left(\frac{3}{4}\right)^n
\prod_{j=1}^n(1-u_j^2).
\label{eq:triangle-epanechnikov-kernel}
\end{equation}
This kernel is nonnegative, has unit mass on \([-1,1]^n\), and vanishes
on the boundary. From \eqref{eq:triangle-wave-matching}, its spatial
demodulator is
\begin{equation}
\begin{aligned}
m_{Q,i}(u)
=
2^{n+1}\left(\frac{3}{4}\right)^n u_i
\prod_{j\neq i}(1-u_j^2).
\end{aligned}
\label{eq:triangle-epanechnikov-demodulator}
\end{equation}
If \(\theta_j(t)=\omega\hat\omega_jt\), the corresponding time-domain
demodulator is
\begin{equation}
\begin{aligned}
M_{Q,i}(t)
=
\frac{2^{n+1}}{a}
\left(\frac{3}{4}\right)^n
\phi\bigl(\theta_i(t)\bigr)
\prod_{j\neq i}
\left[
1-\phi\bigl(\theta_j(t)\bigr)^2
\right].
\end{aligned}
\label{eq:triangle-epanechnikov-time}
\end{equation}
This signal is bounded and continuous, and is piecewise polynomial in
the triangle-wave coordinates.

Another convenient choice is the product-cosine kernel
\begin{equation}
\kappa_C(u)
=
\left(\frac{\pi}{4}\right)^n
\prod_{j=1}^n
\cos\left(\frac{\pi u_j}{2}\right).
\label{eq:triangle-cosine-kernel}
\end{equation}
It is also nonnegative, has unit mass, and vanishes on the boundary.
Its spatial demodulator is
\begin{equation}
\begin{aligned}
m_{C,i}(u)
=
\frac{\pi^{n+1}}{2^{n+1}}
\sin\left(\frac{\pi u_i}{2}\right)
\prod_{j\neq i}
\cos\left(\frac{\pi u_j}{2}\right).
\end{aligned}
\label{eq:triangle-cosine-demodulator}
\end{equation}
The identities
$
\sin\left(\pi\phi(\theta)/2\right)
=\sin\theta
$
and
$
\cos\left(\pi\phi(\theta)/2\right)
=|\cos\theta|
$
then give the particularly simple time-domain realization
\begin{equation}
\begin{aligned}
M_{C,i}(t)
=
\frac{\pi^{n+1}}{a\,2^{n+1}}
\sin\bigl(\theta_i(t)\bigr) 
\prod_{j\neq i}
\left|\cos\bigl(\theta_j(t)\bigr)\right|.
\end{aligned}
\label{eq:triangle-cosine-time}
\end{equation}
This demodulator is bounded and continuous. The product
quadratic kernel is especially simple in the spatial coordinates,
whereas the product-cosine kernel gives the cleaner phase-domain
realization.

By contrast, using the product-semicircle kernel with the triangle
wave would produce a factor of the form
\[
\frac{\phi(\theta_i)}
{\sqrt{1-\phi(\theta_i)^2}},
\]
which becomes unbounded when the triangle wave reaches \(\pm1\).
This illustrates why the perturbation and smoothing kernel should be chosen
together: although the matching rule is general, some combinations
lead to substantially more regular time-domain demodulators than
others.

Fig.~\ref{fig:perturbation-density-kernel-comparison} compares the occupation
densities and matched smoothing kernels of the two designs for \(n=2\).
The uniform occupation density makes the triangle-wave design particularly
flexible: any kernel satisfying the required boundary and normalization
conditions and having Lipschitz partial derivatives yields Lipschitz
demodulation signals directly through \(m_i=-2^n\partial_i\kappa\).
Unlike the sinusoidal design, the matching rule involves no division by a
coordinate-dependent occupation density.

\subsection{Why the Classical Demodulator Does Not Produce a Common Kernel}
\label{subsec:classical-demodulator-limitation}

For comparison, the classical multivariable demodulator is
\begin{equation}
    M_i^{\mathrm{cl}}(t)
    =
    \frac{2}{a}
    \sin(\omega\hat\omega_i t).
    \label{eq:classical-multivariable-demodulator}
\end{equation}
Repeating the integration by parts and coordinate transformation steps gives
\begin{equation}
    \bar F_i^{\mathrm{cl}}(x)
    =
    -\int_{[-1,1]^n}
    \partial_iJ(x+au)
    \kappa_i^{\mathrm{cl}}(u)\,\mathrm{d}u,
    \label{eq:classical-component-average}
\end{equation}
where
\begin{equation}
    \kappa_i^{\mathrm{cl}}(u)
    =
    \frac{2}{\pi}\sqrt{1-u_i^2}
    \prod_{j\neq i}
    \frac{1}{\pi\sqrt{1-u_j^2}}.
    \label{eq:classical-component-kernel}
\end{equation}
Each \(\kappa_i^{\mathrm{cl}}\) has unit mass, but the kernel depends on the
component \(i\). Consequently, for \(n>1\), the classical average dynamics do
not generally have the common-kernel form
\begin{equation}
    -\int_{[-1,1]^n}
    \nabla J(x+au)\kappa(u)\,\mathrm{d}u
    =
    -\nabla J_a(x),
    \label{eq:common-kernel-form-comparison}
\end{equation}
with a single scalar \(\kappa\). The classical average field therefore cannot generally be expressed as the
gradient of a single smoothed scalar objective for arbitrary \(J\). This does not mean that
the classical multivariable ES law cannot be stable; it means only that its
stability cannot generally be inferred from GUAS of the gradient flow of a
smoothed objective for $n>1$.

\section{Examples}
\label{sec:examples}
\subsection{Application in Nonlinear Programming}
\label{subsec:nonsmooth-penalty-example}

\begin{figure}[!t]
    \centering
    \includegraphics[width=\linewidth]
    {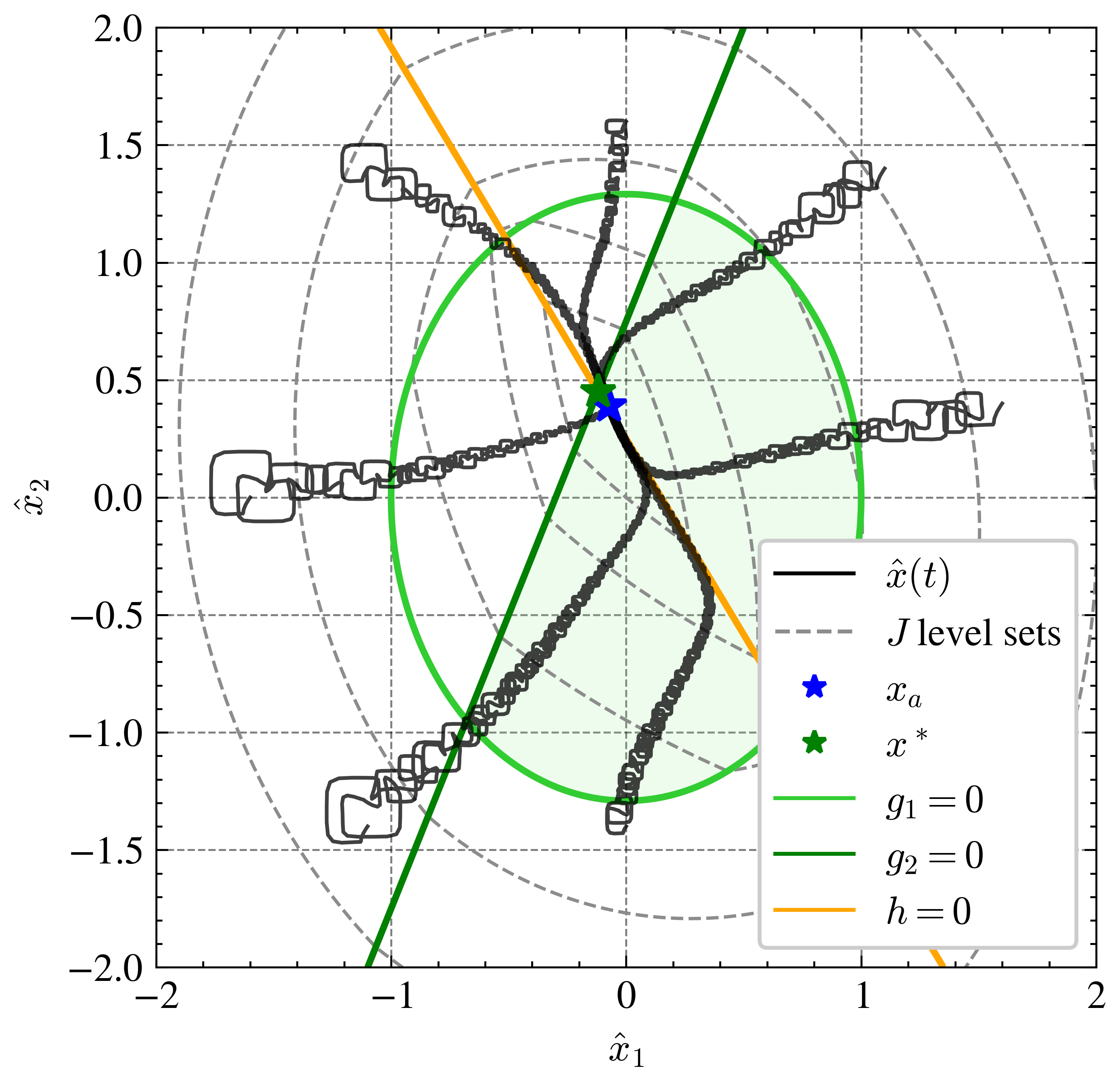}
    \caption{Parameter estimate trajectories for the nonsmooth penalty
    objective. The dashed gray curves are level sets of $J$, the light-green
    region satisfies $g_1(x)\leq0$ and $g_2(x)\leq0$, and the orange line
    denotes $h(x)=0$. The green and blue stars denote $x^*$ and $x_a$,
    respectively.}
    \label{fig:penalty-example-state-space}
\end{figure}

\begin{figure}[!t]
    \centering
    \includegraphics[width=\linewidth]
    {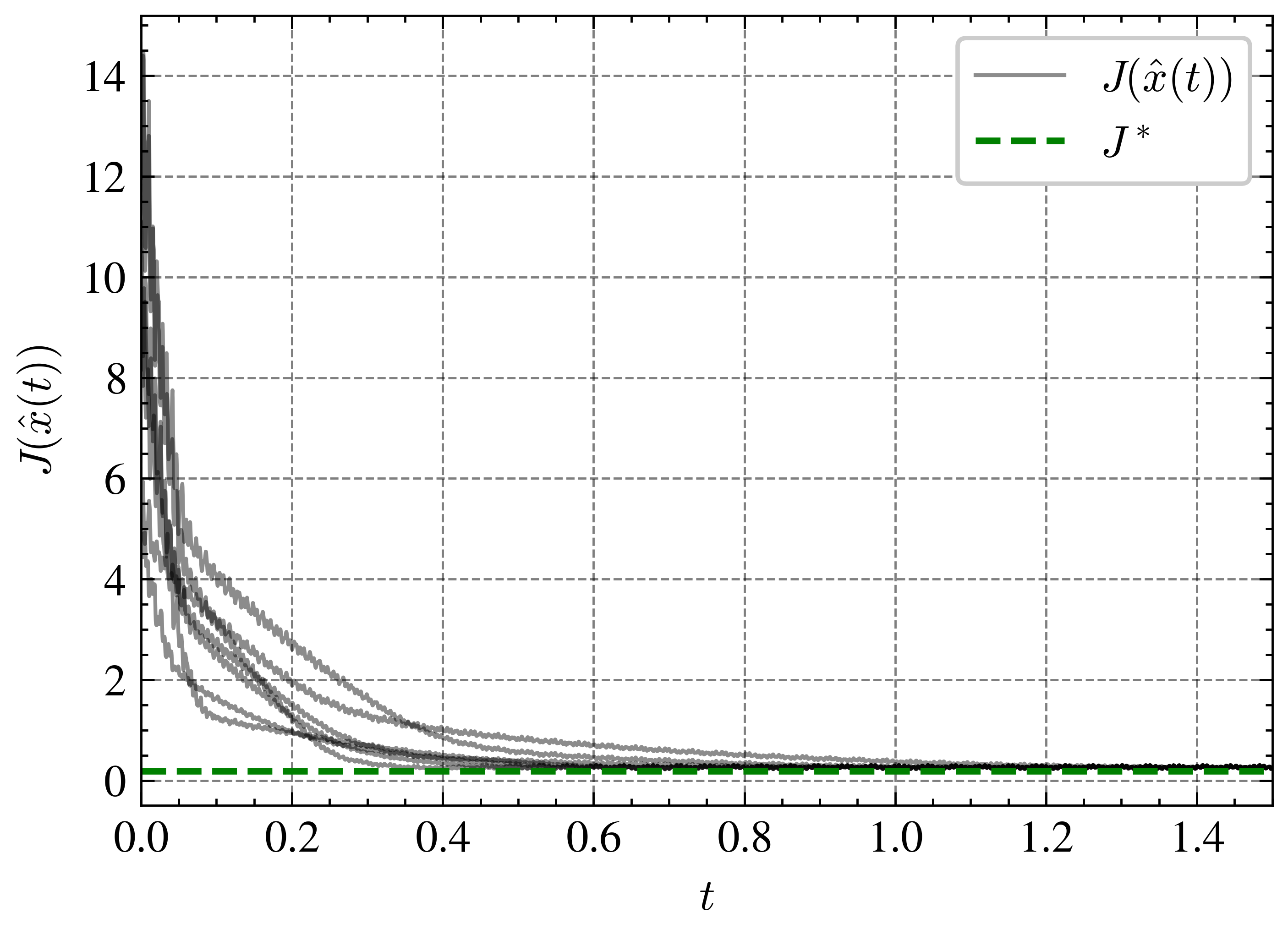}
    \caption{Objective values along the extremum seeking trajectories. The
    dashed green line denotes $J^*=J(x^*)$.}
    \label{fig:penalty-example-objective}
\end{figure}

We wish to minimize the locally Lipschitz objective
\[
    J(x)
    =f_0(x)+2.40|h(x)|+3.00[g_1(x)]_++2.00[g_2(x)]_+,
\]
where $[r]_+:=\max\{r,0\}$ and 
\begin{align*}
f_0(x)
&:=0.35(x_1+0.80)^2+0.55(x_2-0.70)^2,\\
h(x)
&:=x_1+0.60x_2-0.15,\\
g_1(x)
&:=x_1^2+0.60x_2^2-1,\\
g_2(x)
&:=-x_1+0.40x_2-0.30.
\end{align*}
The objective \(J\) has the standard nonsmooth exact-penalty form for minimizing \(f_0(x)\) over \(x\in\mathbb{R}^2\) subject to \(h(x)=0\), \(g_1(x)\leq0\), and \(g_2(x)\leq0\). The absolute-value term penalizes violation of the equality constraint, while the positive-part terms penalize violations of the inequalities. Under standard constraint qualifications, sufficiently large penalty weights recover solutions of the constrained problem \cite[Sec.~17.2]{nocedal2006numerical}.

Suppose that the objective and constraint-violation signals are only available
through measurements. For example, $f_0$ may be a measured performance signal, while $h$, $g_1$, and $g_2$ may represent measured operating limits or safety-related signals.
Their measured values can be combined online to form the scalar objective
$J$, which can then be supplied directly to the extremum seeking law. The
penalty terms encourage constraint satisfaction.

We apply \eqref{eq:design-perturbation}--\eqref{eq:proposed-ES-vector} with
\(n=2\), \(\hat\omega=(1,\sqrt{2})\), \(k=1\), \(a=0.25\), and
\(\omega=1000\). Forward Euler integration is performed over
\(t\in[0,1.5]\) with \(\Delta t=1.77\times10^{-4}\).
The perturbation components are
\begin{equation}
    S_1(t)=a\sin(\omega t),
    \qquad
    S_2(t)=a\sin(\sqrt{2}\,\omega t),
\label{eq:penalty-example-perturbation}
\end{equation}
and the demodulation components are
\begin{equation}
\begin{aligned}
    M_1(t)
    &=\frac{4}{a}\sin(\omega t)
      \cos^2(\sqrt{2}\,\omega t),\\
    M_2(t)
    &=\frac{4}{a}\sin(\sqrt{2}\,\omega t)
      \cos^2(\omega t).
\end{aligned}
\label{eq:penalty-example-demodulation}
\end{equation}

For this particular objective, Assumption~\ref{ass:average-system-GUAS} is
straightforward to verify. The Hessian of $f_0$ is
$\operatorname{diag}(0.7,1.1)$, so $f_0$ is $0.7$-strongly convex. Moreover,
$|h|$ is convex because $h$ is affine, while $[g_1]_+$ and $[g_2]_+$ are
convex because they are pointwise maxima of convex functions
\cite[Sec.~3.2]{boyd2004convex}. Consequently, $J$ is $0.7$-strongly convex.
Each translate $x\mapsto J(x+au)$ has the same strong-convexity parameter,
and a nonnegative unit-mass integral preserves this parameter
\cite[Secs.~3.2.1 and~9.1.2]{boyd2004convex}. Hence, $J_a$ is also
$0.7$-strongly convex. Its gradient flow is therefore GUAS at its unique
minimizer, and the hypothesis of Theorem~\ref{thm:main-ES-result} is
satisfied. For the selected penalty weights, the minimizer of $J$ is
$x^*=(-0.12,0.45)^\top$, while the minimizer of the smoothed objective is
approximately $x_a=(-0.0726,0.3891)^\top$.

Fig.~\ref{fig:penalty-example-state-space} shows the parameter trajectories
from several initial conditions. The trajectories approach a neighborhood of $x_a$, while the displacement
between $x_a$ and $x^*$ illustrates the bias introduced by the nonzero
smoothing amplitude. Fig.~\ref{fig:penalty-example-objective} shows the
corresponding values of $J(\hat x(t))$. These values approach a neighborhood
of $J(x_a)$, which lies close to $J^*:=J(x^*)$ for the selected amplitude. In this example, the spatial averaging inherent in the design biases
\(x_a\) toward the interior of
\(\{x:g_1(x)\leq0,\ g_2(x)\leq0\}\) because, near a constraint boundary
\(g_i=0\), perturbation samples with \(g_i>0\) activate the penalty and
contribute the locally inward descent direction \(-\nabla g_i\).
This effect may be useful when \(g_1\) and \(g_2\) represent
safety-related measurement signals.

\subsection{Removal of Local Minima by Smoothing}
\label{subsec:rastrigin-example}

Consider the two-dimensional Rastrigin objective
\begin{equation}
    J(x)
    =
    20+x_1^2+x_2^2
    -10\cos(2\pi x_1)
    -10\cos(2\pi x_2).
    \label{eq:rastrigin-objective}
\end{equation}
The function is smooth and nonconvex, with many local minima arranged
throughout the state space. Its unique global minimizer is $x^*=0$, at which
$J(x^*)=0$.
\begin{figure}[!t]
    \centering
    \includegraphics[width=0.95\linewidth]
    {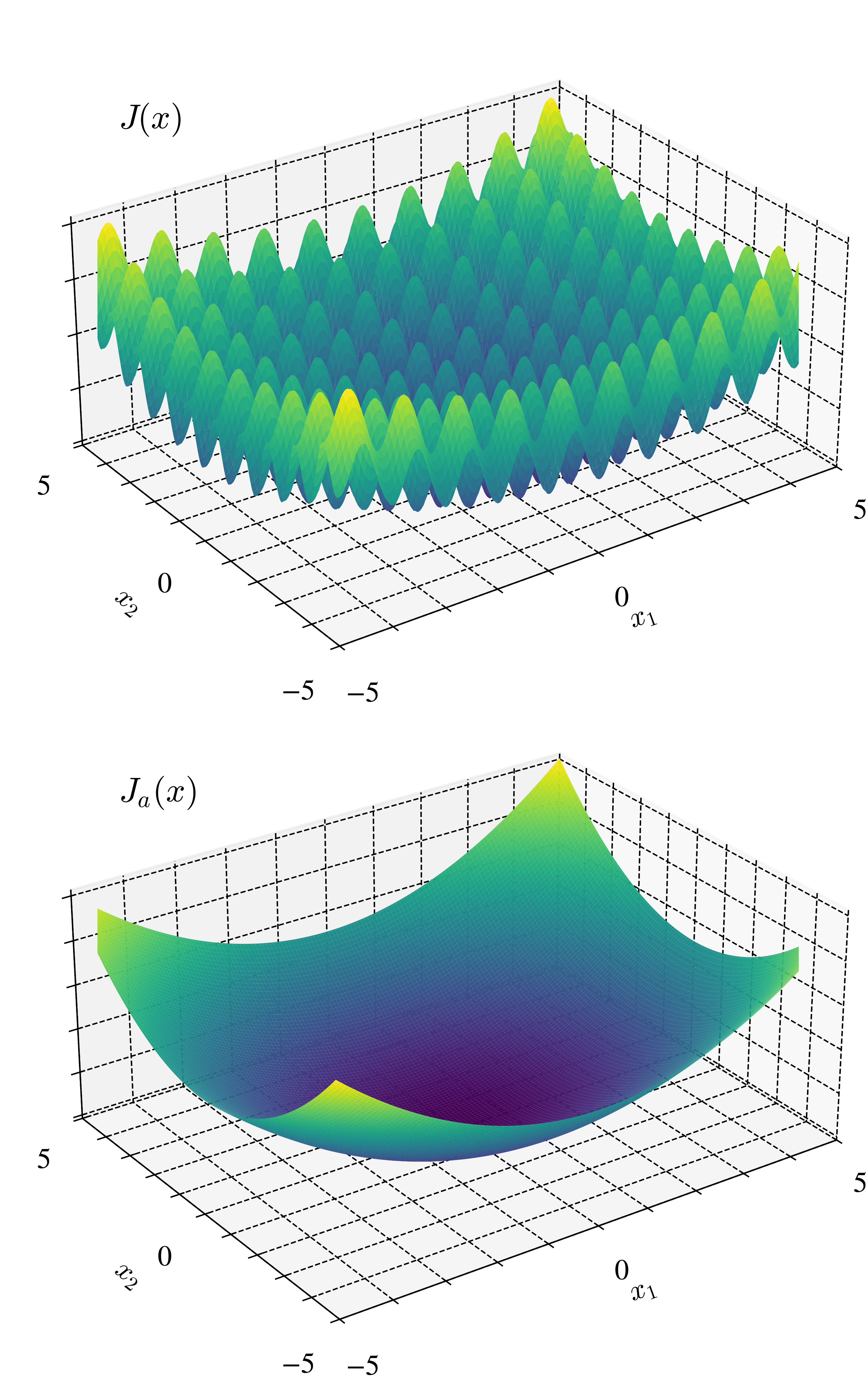}
    \caption{The Rastrigin objective \(J\) (top) and the smoothed objective
    \(J_a\) generated by the product-cosine kernel with \(a=0.75\) (bottom).
    At this amplitude, the oscillatory terms average to zero and \(J_a\) is
    exactly quadratic.}
    \label{fig:rastrigin-surfaces}
\end{figure}

\begin{figure}[!t]
    \centering
    \includegraphics[width=\linewidth]
    {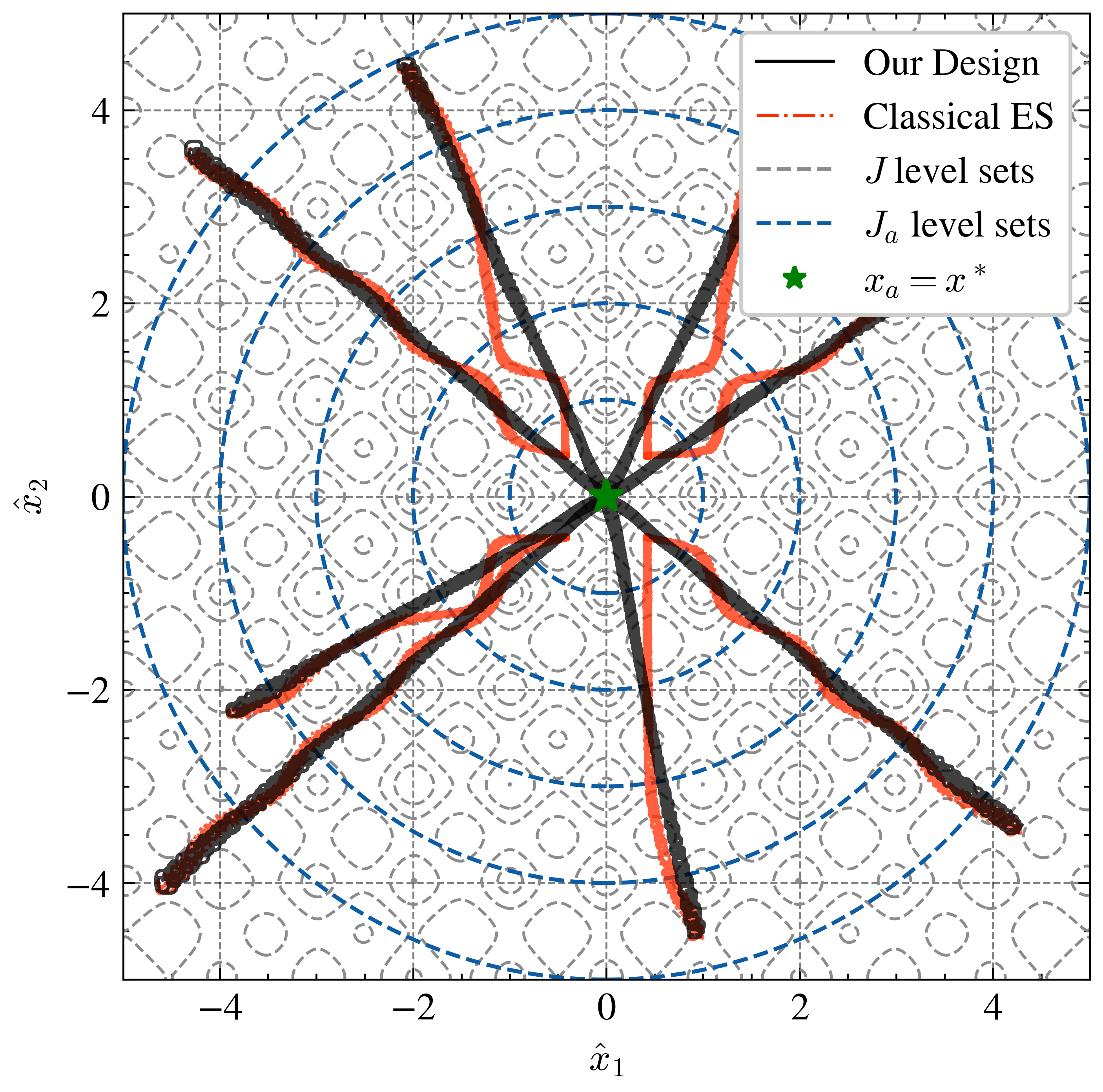}
    \caption{Triangle-wave extremum seeking trajectories for the Rastrigin
    objective. The dashed gray and blue curves are level sets of $J$ and
    $J_a$ respectively, and the green star denotes their common minimizer
    $x_a=x^*=0$. We plot the trajectories of our design in black, and the classical design in orange.}
    \label{fig:rastrigin-triangle-trajectories}
\end{figure}

We apply the triangle-wave design from
Section~\ref{subsec:other-perturbation-signals}. For the relative frequencies
$\hat\omega_1=1$ and $\hat\omega_2=\sqrt{2}$, the perturbation components are
\begin{equation}
\begin{aligned}
    S_1(t)
    &=
    \frac{2a}{\pi}\arcsin\bigl(\sin(\omega t)\bigr),\\
    S_2(t)
    &=
    \frac{2a}{\pi}
    \arcsin\bigl(\sin(\sqrt{2}\,\omega t)\bigr),
\end{aligned}
\label{eq:rastrigin-triangle-perturbation}
\end{equation}
and the product-cosine demodulation components are
\begin{equation}
\begin{aligned}
    M_1(t)
    &=
    \frac{\pi^3}{8a}
    \sin(\omega t)
    \left|\cos(\sqrt{2}\,\omega t)\right|,\\
    M_2(t)
    &=
    \frac{\pi^3}{8a}
    \sin(\sqrt{2}\,\omega t)
    \left|\cos(\omega t)\right|.
\end{aligned}
\label{eq:rastrigin-triangle-demodulation}
\end{equation}
The simulations use $k=1$, $a=0.75$, and $\omega=2000$. The trajectories are simulated over
$t\in[0,3]$ using forward Euler integration with $\Delta t= 8.85 \times 10^{-5}$.

As shown in Section~\ref{subsec:other-perturbation-signals}, this perturbation and
demodulation pair produces the product-cosine kernel
\begin{equation}
    \kappa_C(u)
    =
    \left(\frac{\pi}{4}\right)^2
    \cos\left(\frac{\pi u_1}{2}\right)
    \cos\left(\frac{\pi u_2}{2}\right),
    \qquad u\in[-1,1]^2.
    \label{eq:rastrigin-cosine-kernel}
\end{equation}
Accordingly, the smoothed objective is
\begin{equation}
    J_a(x)
    =
    \int_{[-1,1]^2}
    J(x+au)\kappa_C(u)\,\mathrm{d}u.
    \label{eq:rastrigin-smoothed-objective}
\end{equation}
To evaluate this integral, define
\begin{equation}
\begin{aligned}
    \sigma(a)
    &:=
    \frac{\pi}{4}
    \int_{-1}^{1}
    \cos(2\pi au)
    \cos\left(\frac{\pi u}{2}\right)\,\mathrm{d}u\\
    &=
    \begin{cases}
        \displaystyle
        -\frac{\cos(2\pi a)}{16a^2-1},
        & a\neq\frac14,\\[2ex]
        \displaystyle
        \frac{\pi}{4},
        & a=\frac14.
    \end{cases}
\end{aligned}
\label{eq:rastrigin-sigma}
\end{equation}
Also note the identity
\[
    \frac{\pi}{4}
    \int_{-1}^{1}
    u^2\cos\left(\frac{\pi u}{2}\right)\,\mathrm{d}u
    =
    1-\frac{8}{\pi^2}.
\]
Substitution of the above identities into \eqref{eq:rastrigin-smoothed-objective} gives
\begin{multline}
    J_a(x)
    =
    20+2a^2\left(1-\frac{8}{\pi^2}\right)
    +x_1^2+x_2^2 \\
    {}-10\sigma(a)
    \left[
        \cos(2\pi x_1)+\cos(2\pi x_2)
    \right].
    \label{eq:rastrigin-smoothed-objective-reduced}
\end{multline}
At the selected amplitude $a=0.75$,
$
    \sigma(0.75)
    % =
    % -\frac{\cos(3\pi/2)}{8}
    =0.
$
Consequently, the smoothing operation removes the entire oscillatory part of
$J$ and yields the quadratic function
\begin{equation}
    J_a(x)
    =
    20+\frac{9}{8}\left(1-\frac{8}{\pi^2}\right)
    +x_1^2+x_2^2.
    \label{eq:rastrigin-smoothed-objective-quadratic}
\end{equation}
Thus, $J_a$ is $2$-strongly convex, $x_a=x^*=0$, and the average system in
slow time reduces to
\begin{equation}
    \frac{\mathrm{d}z}{\mathrm{d}s}
    =-2z.
    \label{eq:rastrigin-average-system}
\end{equation}
In this example, smoothing does not merely reduce the influence of the local
minima: it removes every nonzero local minimum exactly while preserving the
global minimizer of the original objective. Although $a=0.75$ is selected to
cancel the oscillatory terms exactly, the same qualitative behavior holds for
all sufficiently large amplitudes. For $a>1/4$,
$
|\sigma(a)|
\leq
\frac{1}{16a^2-1},
$
so the corresponding averaged dynamics have similar
global convergence behavior for all sufficiently large $a$. 

Fig.~\ref{fig:rastrigin-surfaces} compares the original and smoothed
objectives over $[-5,5]^2$. The repeated wells of $J$ are visible in the
upper panel, whereas the lower panel shows that the smoothed objective is
a single quadratic bowl. Fig.~\ref{fig:rastrigin-triangle-trajectories} shows the trajectories of the
triangle-wave extremum seeking system from eight initial conditions. Apart from their small fast oscillations, the trajectories take
strikingly direct paths toward the origin despite passing through many basins
of the original objective. This behavior follows from
\eqref{eq:rastrigin-average-system}: its average solutions are
$z(s)=\exp(-2s)z(0)$ and therefore remain on straight rays leading to the
origin. The simulation illustrates how the trajectories
inherit the global behavior of the gradient flow of the smoothed
objective rather than becoming trapped near the local minima of \(J\).

For comparison with a conventional periodic triangle-wave ES design, we
also implement its classical perturbation--demodulation analogue. Its two state equations are
\[
\begin{aligned}
    \dot{\hat x}_1(t)
    &=
    -kJ\bigl(\hat x(t)+S(t)\bigr)M_1(t),\\
    \dot{\hat x}_2(t)
    &=
    -kJ\bigl(\hat x(t)+S(t)\bigr)M_2(t),
\end{aligned}
\]
where the perturbation and demodulation components are
\[
\begin{aligned}
    S_i(t)
    &=
    \frac{2a}{\pi}
    \arcsin\bigl(\sin(\hat\omega_i\omega t)\bigr),\\
    M_i(t)
    &=
    \frac{6}{a\pi}
    \arcsin\bigl(\sin(\hat\omega_i\omega t)\bigr),
    \qquad i\in\{1,2\}.
\end{aligned}
\]
We take $\hat\omega_1=1$ and $\hat\omega_2=17/12\approx\sqrt{2}$ and use
the same initial conditions and the same values of $a$, $k$, $\omega$, and $\Delta t$ as
our design.

The classical analysis technique follows from a Taylor expansion and a periodic
time average. Introduce the fast time $\tau=\omega t$ and define the unit
triangle waves
\[
    q_i(\tau)
    =
    \frac{2}{\pi}
    \arcsin\bigl(\sin(\hat\omega_i\tau)\bigr),
    \qquad i\in\{1,2\}.
\]
Thus, $S=aq$ and $M=(3/a)q$. For the selected relative frequencies, $q$ is
periodic with common period $T=24\pi$ and satisfies
\[
    \frac{1}{T}
    \int_0^{T}q(\tau)\,\mathrm{d}\tau
    =0,
    \qquad
    \frac{1}{T}
    \int_0^{T}
    q(\tau)q(\tau)^\top\,\mathrm{d}\tau
    =
    \frac{1}{3}I.
\]
Since $J$ is smooth,
\[
    J\bigl(z+aq(\tau)\bigr)
    =
    J(z)
    +a q(\tau)^\top\nabla J(z)
    +O(a^2).
\]
Substituting this expansion into the periodic average gives
\[
\begin{aligned}
    \frac{\mathrm{d}z}{\mathrm{d}\tau}
    &=
    -\frac{k}{\omega}\frac{3}{a}
    \frac{1}{T}
    \int_0^{T}
    J\bigl(z+aq(\tau)\bigr)q(\tau)\,\mathrm{d}\tau\\
    &=
    -\frac{k}{\omega}
    \left(\nabla J(z)+O(a)\right).
\end{aligned}
\]
This calculation gives the standard small-amplitude interpretation of the
classical design and confirms the normalization \(3/a\) of its demodulation
signal. By contrast, the matched design has an averaged vector field exactly
equal to \(-\nabla J_a\) at the fixed perturbation amplitude. Moreover, even if the
classical triangle-wave demodulator were paired with rationally independent
frequencies, its spatial form \(m_i(u)=3u_i\) would not satisfy
\eqref{eq:triangle-wave-matching} for the product-cosine kernel, because
\(-\partial_i\kappa_C(u)\) contains factors depending on the coordinates
\(u_j\), \(j\neq i\). As shown in
Fig.~\ref{fig:rastrigin-triangle-trajectories}, the classical trajectories
initially progress toward the origin but are eventually trapped near local
minima of \(J\), in contrast to the trajectories of the matched design.

\section{Conclusion}
\label{sec:conclusion}
This paper developed a multivariable extremum seeking method for locally
Lipschitz objectives. Rationally independent perturbation frequencies and matched
demodulation signals produce averaged dynamics exactly equal to the negative
gradient of a smoothed objective. When the
gradient flow of the smoothed objective is globally uniformly asymptotically stable, the
extremum seeking system is practically globally uniformly asymptotically stable. A general matching relation between the perturbation occupation density,
demodulation signals, and smoothing kernel explains the sinusoidal
construction and yields alternative designs, including a
triangle-wave design. The present results concern direct evaluations of a static objective.
Extensions to estimator filters and dynamic plants, with the associated time scale separation requirements, remain for future work.

%\section*{Appendix}
\appendix
\subsection{Differentiation Under the Integral}
\label{app:diff-under-integral}
%\subsection{Differentiation Under the Integral Sign}
The following result demonstrates that differentiation may be passed under the integral sign in a specific integral form.

\begin{proposition}
\label{prop:diff-under-integral}
Let $K\subset\mathbb{R}^n$ be compact, let $a>0$, and let
$\kappa:K\to\mathbb{R}_{\geq0}$ satisfy
\[
    \int_K \kappa(u)\,\mathrm{d}u=1.
\]
Suppose that $J:\mathbb{R}^n\to\mathbb{R}$ is locally Lipschitz, and define
\[
    J_a(x)
    :=
    \int_K J(x+au)\kappa(u)\,\mathrm{d}u.
\]
Then $J_a$ is differentiable on $\mathbb{R}^n$, with
\[
    \nabla J_a(x)
    =
    \int_K \nabla J(x+au)\kappa(u)\,\mathrm{d}u,
\]
where $\nabla J$ is understood almost everywhere.
\end{proposition}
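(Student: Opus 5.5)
The plan is to prove Fréchet differentiability of $J_a$ at an arbitrary fixed $x\in\mathbb{R}^n$ directly, using the candidate gradient
\[
    g(x):=\int_K \nabla J(x+au)\kappa(u)\,\mathrm{d}u,
\]
and dominated convergence. We show that the normalized remainder $\bigl(J_a(x+h)-J_a(x)-g(x)^\top h\bigr)/\|h\|$ tends to zero along every sequence $h_k\to0$, $h_k\neq0$. The sequential characterization of limits then gives differentiability with $\nabla J_a(x)=g(x)$.

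\textbf{Preliminaries.} Fix $x$ and consider the compact set $B:=\{x+au+h:\ u\in K,\ \|h\|\le1\}$. Let $L$ be a Lipschitz constant of $J$ on a convex compact neighborhood of $B$, for instance its closed convex hull.
\begin{itemize}
\item By Rademacher's theorem, the set $N$ where $J$ fails to be differentiable has Lebesgue measure zero. The set $\{u: x+au\in N\}=(N-x)/a$ is an affine image of $N$, so it is also null. Hence $J$ is differentiable at $x+au$ for almost every $u\in K$.
\item At every such point, $\|\nabla J(x+au)\|\le L$.
\item Measurability of $u\mapsto\nabla J(x+au)$ holds because each component is the almost-everywhere limit of the continuous difference quotients $m\bigl(J(x+au+e_i/m)-J(x+au)\bigr)$.
\item Since $\nabla J(x+au)$ is bounded and $\kappa$ is integrable, $g(x)$ is well defined and finite. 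Similarly, $J(x+au)\kappa(u)$ is integrable because $J$ is continuous on the compact set $x+aK$, so $J_a$ is finite.
\end{itemize}

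\textbf{Main step.} For $h\neq0$ with $\|h\|\le1$, write
\[
    \frac{J_a(x+h)-J_a(x)-g(x)^\top h}{\|h\|}
    =\int_K R_h(u)\kappa(u)\,\mathrm{d}u,
\]
where
\[
    R_h(u):=\frac{J(x+au+h)-J(x+au)-\nabla J(x+au)^\top h}{\|h\|}.
\]
This uses linearity of the integral and the unit-mass property only to justify finiteness.
\begin{itemize}
\item \emph{Pointwise convergence.} For almost every $u$, Fréchet differentiability of $J$ at $x+au$ gives $R_{h_k}(u)\to0$ as $k\to\infty$.
\item \emph{Domination.} Both $x+au$ and $x+au+h$ lie in the region where $L$ is a Lipschitz constant, so $|R_{h}(u)|\le L+\|\nabla J(x+au)\|\le 2L$. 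Thus $|R_{h_k}\kappa|\le 2L\kappa$, which is integrable.
\end{itemize}
Dominated convergence then yields $\int_K R_{h_k}\kappa\,\mathrm{d}u\to0$, which completes the argument.

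\textbf{Main obstacle.} The only delicate point is the null-set bookkeeping. We must ensure that the exceptional set of $u$ where $\nabla J(x+au)$ is undefined is null for each fixed $x$. This follows from the scaling/translation argument above. Working with the full Fréchet remainder, rather than with partial derivatives alone, also avoids the need to separately prove continuity of the partials.
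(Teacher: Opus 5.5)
Your proposal is correct and follows essentially the same route as the paper. Both fix $x$, use Rademacher's theorem to get differentiability of $J$ at $x+au$ for almost every $u\in K$, bound the normalized Fréchet remainder by $2L\kappa(u)$ using a Lipschitz constant $L$ on a fixed compact set, and conclude by dominated convergence; your sequential formulation, measurability argument for $u\mapsto\nabla J(x+au)$, and affine null-set bookkeeping are valid refinements that the paper leaves implicit.
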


\begin{proof}
Fix $x\in\mathbb{R}^n$. Since $J$ is locally Lipschitz, the map
$
    u\mapsto J(x+au)
$
is locally Lipschitz. By Rademacher's theorem
\cite[Theorem~3.2]{evans1991measure}, it is differentiable for almost
every $u\in K$. Since $a>0$, this is equivalent to differentiability of
$J$ at $x+au$.

For all sufficiently small $h\in\mathbb{R}^n$, the points $x+au$ and
$x+h+au$, with $u\in K$, lie in a fixed compact set. Let $L$ be a
Lipschitz constant for $J$ on this set. At every point where $J$ is
differentiable, one also has
$
    \|\nabla J(x+au)\|\leq L.
$
So, for almost every $u\in K$, the triangle inequality implies
\begin{multline*}
\frac{
\left|
J(x+h+au)-J(x+au)
-\nabla J(x+au)^\top h
\right|}
{\|h\|}
\kappa(u) \\
\leq 2L\kappa(u).
\end{multline*}
The left-hand side converges to zero as $h\to0$ for almost every
$u\in K$ by the definition of differentiability
\cite[Definition~3.2]{evans1991measure}, and $2L\kappa$ is integrable. The dominated convergence theorem
\cite[Theorem~2.24]{folland1999real} and the triangle
inequality therefore give
\begin{align*}
\lim_{h\to0}
\frac{1}{\|h\|}
\Bigg|
J_a(x+h) & -J_a(x)
- \\
&\left(
\int_K\nabla J(x+au)\kappa(u)\,\mathrm{d}u
\right)^\top h
\Bigg|
=0.
\end{align*}
Thus $J_a$ is differentiable at $x$ with the stated gradient. Since
$x$ was arbitrary, the result holds on $\mathbb{R}^n$.
\end{proof}

\subsection{GUAS and PGUAS Bounds}
\label{app:KL-bounds}

The following proposition is a consequence of
\cite[Theorem~1]{moreau2000practical} and expresses its conclusion in the
$\mathcal{KL}$ form of Definition~\ref{def:PGUAS}. In particular, the same
comparison function $\beta\in\mathcal{KL}$ that establishes GUAS of the
limiting system may be used in the practical estimate for the
$\varepsilon$-dependent system.

\begin{proposition}
\label{prop:moreau-KL-bound}
Suppose Hypotheses~1 and~2 of \cite{moreau2000practical} hold for
\eqref{eq:practical-parameter-system} and
\eqref{eq:practical-limit-system}, and suppose that the origin of
\eqref{eq:practical-limit-system} is GUAS with some
$\beta\in\mathcal{KL}$. Then the origin of
\eqref{eq:practical-parameter-system} is PGUAS in the sense of
Definition~\ref{def:PGUAS}, with the same comparison function $\beta$.
\end{proposition}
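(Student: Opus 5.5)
The plan is to translate the conclusion of \cite[Theorem~1]{moreau2000practical} into the $\mathcal{KL}$ estimate of Definition~\ref{def:PGUAS}. That theorem gives uniform practical stability, uniform practical boundedness, and uniform practical attractivity in terms of $\delta$'s. The key observation is that the closeness of trajectories (Hypothesis~2) lets us use the \emph{same} $\beta$, up to $\nu$, on finite horizons. We then patch these horizons together using practical stability and attractivity. Without loss of generality we assume $\beta(r,0)\ge r$.

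\textbf{Step 1 (small times).} Fix $\Delta,\nu>0$. By GUAS, $\beta(r,s)\to0$ as $s\to\infty$, so there is a time $T_1$ with $\beta(\Delta,T_1)<\nu/4$. We apply Hypothesis~2 on $K=\{\|x_0\|\le\Delta\}$ with horizon $T_1$ and tolerance $d=\nu/2$. This gives, for small $\varepsilon$ and uniformly in $t_0$,
\[
\|x(t)\|\le \|z(t)\|+\nu/2\le\beta(\|x_0\|,t-t_0)+\nu/2
\qquad\text{on } [t_0,t_0+T_1].
\]
In particular, $\|x(t_0+T_1)\|<3\nu/4$.

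\textbf{Step 2 (large times).} For $t\ge t_0+T_1$, we need $\|x(t)\|\le\nu$. We obtain this from the uniform practical stability part of Moreau--Aeyels. For every $\nu$ there is a $\delta(\nu)$ and an $\varepsilon$-threshold such that $\|x(t_1)\|<\delta$ implies $\|x(t)\|<\nu$ for all $t\ge t_1$.

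If $\delta(\nu)<3\nu/4$, we repeat Step~1 with the target $\delta(\nu)$ in place of $3\nu/4$. That is, we choose $T_1$ with $\beta(\Delta,T_1)<\delta/2$ and $d=\delta/2$. We then take $\varepsilon^*(\Delta,\nu)$ to be the minimum of the two thresholds.

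Combining the two steps gives
\[
\|x(t)\|\le\beta(\|x_0\|,t-t_0)+\nu
\qquad\text{for all } t\ge t_0.
\]
This holds on $[t_0,t_0+T_1]$ by Step~1 and afterwards by Step~2, since $\beta\ge0$ there. Existence of the solution for all $t\ge t_0$ follows from boundedness together with Hypothesis~1.

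\textbf{Main obstacle.} The hard step is Step~2: extracting uniform practical stability with \emph{initial-time uniformity} from \cite[Theorem~1]{moreau2000practical}. The $\varepsilon$-threshold must depend only on $(\Delta,\nu)$, and its hypotheses must match our Definition~\ref{def:GUAS}. GUAS in $\mathcal{KL}$ form implies their uniform stability and uniform global attractivity via $\beta(r,0)$ and $\beta(r,s)\to0$. I would verify this translation first, and then carefully check that $\beta$ itself never needs to be modified.
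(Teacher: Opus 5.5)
Your argument is correct, but it uses a different clause of the conclusion of \cite[Theorem~1]{moreau2000practical} than the paper does.

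The paper invokes practical global uniform attractivity directly, with $c_1=\Delta+1$ (to accommodate the non-strict bound $\|x_0\|\le\Delta$) and $c_2=\nu$. This supplies a horizon $T$ and a threshold $\eta_a$ such that $\|x(t,\varepsilon)\|<\nu$ for all $t\ge t_0+T$, together with forward completeness. Hypothesis~2 on $[t_0,t_0+T]$ with $d=\nu$ then covers the initial window. No restart is needed, and the horizon does not have to be chosen from $\beta$.

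You instead use only practical uniform stability. You pick $\delta(\nu)$ and choose $T_1$ from the decay of $\beta$ so that $\beta(\Delta,T_1)<\delta/2$. Hypothesis~2 with $d=\delta/2$ then places the state strictly inside the $\delta$-ball at $t_1=t_0+T_1$. Finally you restart at $t_1$. The restart is legitimate because solutions are unique (Hypothesis~1) and the stability clause is uniform in the initial time. In effect you re-derive the needed attractivity from finite-horizon closeness plus local practical stability. Your route therefore uses less of the Moreau--Aeyels conclusion, at the cost of the restart step and the appeal to uniqueness. The paper's route is shorter.

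Two small cleanups. First, the normalization $\beta(r,0)\ge r$ is automatic for any GUAS bound (evaluate at $t=t_0$), so $\beta$ is never modified. This matters because the statement demands the same $\beta$. Second, the stability clause with $t=t_0$ forces $\|x_0\|<\nu$ whenever $\|x_0\|<\delta$, so $\delta\le\nu$. You can therefore choose $T_1$ and $d$ from $\delta$ directly, rather than through the conditional ``if $\delta(\nu)<3\nu/4$'' step.
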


\begin{proof}
Fix $\Delta,\nu>0$. By
\cite[Theorem~1]{moreau2000practical}, specifically the practical global
uniform attractivity property in
\cite[Definition~2, Condition~3]{moreau2000practical}, applied with
\[
    c_1=\Delta+1,
    \qquad
    c_2=\nu,
\]
there exist $T>0$ and $\eta_a\in(0,\varepsilon_0]$ such that, whenever
$\|x_0\|\leq\Delta$ and $0<\varepsilon<\eta_a$, the corresponding
solution is forward complete and
\[
    \|x(t,\varepsilon)\|<\nu,
    \qquad
    t\geq t_0+T.
\]

Since the limiting system is GUAS, its solutions are forward complete.
Hypothesis~2 of \cite{moreau2000practical}, applied with
\[
    K=\{x_0\in\mathbb{R}^n:\|x_0\|\leq\Delta\},
    \qquad
    d=\nu,
\]
and the horizon $T$ selected above, gives
$\eta_c\in(0,\varepsilon_0]$ such that
\[
    \|x(t,\varepsilon)-z(t)\|<\nu,
    \qquad
    t_0\leq t\leq t_0+T,
\]
whenever $0<\varepsilon<\eta_c$. Hence,
\[
    \|x(t,\varepsilon)\|
    \leq
    \|z(t)\|+\|x(t,\varepsilon)-z(t)\|
    \leq
    \beta\bigl(\|x_0\|,t-t_0\bigr)+\nu,
\]
for $t_0\leq t\leq t_0+T.$
Define
\(
    \varepsilon^*(\Delta,\nu)
    :=
    \min\{\eta_a,\eta_c\}.
\)
For $t\geq t_0+T$ and $0<\varepsilon<\varepsilon^*(\Delta,\nu)$,
practical global uniform attractivity gives
\[
    \|x(t,\varepsilon)\|
    <
    \nu
    \leq
    \beta\bigl(\|x_0\|,t-t_0\bigr)+\nu.
\]
Combining the two time intervals gives the required estimate for every
$t\geq t_0$. The corresponding solutions are forward complete because
$\varepsilon^*(\Delta,\nu)\leq\eta_a$.
\end{proof}

\subsection{Proof of Corollary~\ref{cor:uniform-KW-average}}
\label{app:uniform-KW-average}

\begin{proof}
Fix a compact set \(K\subset\mathbb{R}^m\), and define
\[
    A_T(x,\psi)
    :=
    \frac{1}{T}\int_0^T
    H(x,\xi\tau+\psi)\,\mathrm{d}\tau.
\]
For each fixed
\((x,\psi)\in K\times[0,2\pi]^r\), apply Theorem~\ref{thm:equidistribution-independent-phases}
componentwise to the continuous periodic function
\[
    u\mapsto H(x,u+\psi).
\]
This gives
\[
\begin{aligned}
    \lim_{T\to\infty}A_T(x,\psi)
    &=
    \frac{1}{(2\pi)^r}
    \int_{[0,2\pi]^r}
    H(x,u+\psi)\,\mathrm{d}u\\
    &=
    \frac{1}{(2\pi)^r}
    \int_{[0,2\pi]^r}
    H(x,u)\,\mathrm{d}u
    =
    \bar H(x),
\end{aligned}
\]
where the second equality follows from periodicity and the change of integration variables $\bar u = u + \psi$.

Continuity, compactness of \(K\), and periodicity imply that \(H\) is
uniformly continuous on \(K\times\mathbb{R}^r\). We now show that
\(\{A_T:T>0\}\) is equicontinuous on
\(K\times[0,2\pi]^r\). Given \(\eta>0\), choose \(\delta>0\) such that
\[
    \|(x_1,v_1)-(x_2,v_2)\|<\delta
    \quad\Longrightarrow\quad
    \|H(x_1,v_1)-H(x_2,v_2)\|<\eta.
\]
If
\[
    \|(x_1,\psi_1)-(x_2,\psi_2)\|<\delta,
\]
then, for every \(\tau\geq0\),
\[
    \left\|
        (x_1,\xi\tau+\psi_1)
        -
        (x_2,\xi\tau+\psi_2)
    \right\|
    =
    \|(x_1,\psi_1)-(x_2,\psi_2)\|
    <\delta.
\]
Therefore, for every \(T>0\),
\[
\begin{aligned}
    & \|A_T(x_1,\psi_1)-A_T(x_2,\psi_2)\| \\
    &\qquad \leq
    \frac{1}{T}\int_0^T
    \left\|
        H(x_1,\xi\tau+\psi_1)
        -
        H(x_2,\xi\tau+\psi_2)
    \right\|\mathrm{d}\tau\\
        &\qquad <
    \frac{1}{T}\int_0^T
     \eta \, \mathrm{d}\tau = \eta .
\end{aligned}
\]
Thus the family \(\{A_T:T>0\}\) is equicontinuous, since \(\delta\) is
independent of \(T\).

Let \(T_j\to\infty\) be any sequence. The sequence
\(\{A_{T_j}\}_{j=1}^{\infty}\) is equicontinuous on the compact set
\(K\times[0,2\pi]^r\) and converges pointwise to
\(\bar H(x)\). Therefore, by
\cite[Chapter~7, Exercise~16]{rudin1976principles},
\(A_{T_j}\) converges uniformly to \(\bar H\).
Since this holds for every sequence \(T_j\to\infty\),
\(A_T\) converges uniformly to \(\bar H\) as \(T\to\infty\).

Finally, for every \(\tau_0\in\mathbb{R}\) and
\(\varphi\in[0,2\pi]^r\), changing variables
\(s=\tau_0+r\) gives
\[
\begin{aligned}
    \frac{1}{T}
    \int_{\tau_0}^{\tau_0+T}
    H(x,\xi s+\varphi)\,\mathrm{d}s
    &=
    \frac{1}{T}
    \int_0^T
    H\bigl(x,\xi r+\xi\tau_0+\varphi\bigr)
    \,\mathrm{d}r\\
    &=
    A_T(x,\xi\tau_0+\varphi).
\end{aligned}
\]
By periodicity, \(\xi\tau_0+\varphi\) may be reduced componentwise
modulo \(2\pi\). The uniform convergence of \(A_T\) with respect to its
second argument therefore proves
\eqref{eq:uniform-KW-average} uniformly in \(x\), \(\varphi\), and
\(\tau_0\).
\end{proof}

\subsection{Proof of Theorem~\ref{thm:general-perturbation-demodulation}}
\label{app:proof-general-design}
\begin{proof}
Let
\(\mathcal U:=[-1,1]^n\), \(\Theta:=[0,2\pi]^n\), and
\(\Phi(\theta):=(\phi(\theta_1),\ldots,\phi(\theta_n))^\top\). For
\(i=1,\ldots,n\), define
\[
    G_i(x,\theta)
    :=
    \frac{1}{a}
    J\bigl(x+a\Phi(\theta)\bigr)
    m_i\bigl(\Phi(\theta)\bigr).
\]
Then
\(J(x+S(t))M_i(t)=G_i(x,\omega\hat\omega t)\).
The function \(G_i\) is continuous, \(2\pi\)-periodic in every phase
variable, and locally Lipschitz in \(x\), uniformly in \(\theta\) on
compact state sets. Since \(\omega\hat\omega\) is rationally
independent, Corollary~\ref{cor:uniform-KW-average} gives the long-time
average
\[
    \bar G_i(x)
    =
    \frac{1}{(2\pi)^n}
    \int_{\Theta}G_i(x,\theta)\,\mathrm{d}\theta,
\]
with convergence uniform in \(x\) on compact sets and in
\(t_0\in\mathbb{R}\).

For almost every fixed \(u_{-i}\), the one-dimensional functions
\(u_i\mapsto J(x+au)\) and \(u_i\mapsto\kappa(u)\) are absolutely
continuous on \([-1,1]\), with
\(a\partial_iJ(x+au)\) almost everywhere, and $\kappa(u) = 0$ at
\(u_i=-1\) and \(u_i=1\) by
\eqref{eq:general-matched-kernel-boundary}. Therefore,
\eqref{eq:general-perturbation-density},
\eqref{eq:general-perturbation-kernel-matching}, integration by parts in
\(u_i\), and Fubini's theorem give
\[
\begin{aligned}
    \bar G_i(x)
    &=
    \frac{1}{a}\int_{\mathcal U}
    J(x+au)m_i(u)\rho(u)\,\mathrm{d}u\\
    &=
    -\frac{1}{a}\int_{\mathcal U}
    J(x+au)\partial_i\kappa(u)\,\mathrm{d}u\\
    &=
    \int_{\mathcal U}
    \partial_iJ(x+au)\kappa(u)\,\mathrm{d}u.
\end{aligned}
\]
Combining the components and applying
Proposition~\ref{prop:diff-under-integral} proves
\[
    \bar G(x)
    =
    \nabla J_{a,\kappa}(x),
\]
and hence \eqref{eq:general-perturbation-long-time-average}. The first line
of the preceding calculation also shows that
\(\nabla J_{a,\kappa}\) is locally Lipschitz, since \(J\) is locally
Lipschitz and each \(m_i\) is bounded on \(\mathcal U\).

For the stability statement, set \(\tau=\omega t\) and
\(\varepsilon=k/\omega\). With \(Q(x,\theta):=-G(x,\theta)\), the
extremum seeking system becomes
\(\mathrm{d}\hat x/\mathrm{d}\tau
=\varepsilon Q(\hat x,\hat\omega\tau)\), whose averaged vector field is
\(-\nabla J_{a,\kappa}\). The uniform convergence established above
and the stated local Lipschitz properties verify the hypotheses of
Theorem~\ref{thm:Sanders-general-averaging}. GUAS of
\eqref{eq:general-perturbation-gradient-flow} ensures its solutions are bounded on finite time intervals. Thus, with \(s=\varepsilon\tau=kt\) and
\(s_0=\varepsilon\tau_0=kt_0\), the extremum-seeking trajectories
converge uniformly to those of
\eqref{eq:general-perturbation-gradient-flow} on every fixed interval
\(s_0\leq s\leq s_0+L\), uniformly over compact sets of initial
conditions and the initial time.

The two conditions of
Theorem~\ref{thm:practical-stability} are therefore satisfied.
Applying that theorem after translating \(x_{a,\kappa}\) to the origin
proves that \(x_{a,\kappa}\) is PGUAS with respect to
\(\varepsilon=k/\omega\) and gives
\eqref{eq:main-SPUAS-bound} with \(x_a\) replaced by
\(x_{a,\kappa}\).
\end{proof}

%\section*{References}
\bibliographystyle{IEEEtran}
\bibliography{IEEEabrv,references}

% Generated by IEEEtran.bst, version: 1.14 (2015/08/26)
\begin{thebibliography}{10}
\providecommand{\url}[1]{#1}
\csname url@samestyle\endcsname
\providecommand{\newblock}{\relax}
\providecommand{\bibinfo}[2]{#2}
\providecommand{\BIBentrySTDinterwordspacing}{\spaceskip=0pt\relax}
\providecommand{\BIBentryALTinterwordstretchfactor}{4}
\providecommand{\BIBentryALTinterwordspacing}{\spaceskip=\fontdimen2\font plus
\BIBentryALTinterwordstretchfactor\fontdimen3\font minus
  \fontdimen4\font\relax}
\providecommand{\BIBforeignlanguage}[2]{{%
\expandafter\ifx\csname l@#1\endcsname\relax
\typeout{** WARNING: IEEEtran.bst: No hyphenation pattern has been}%
\typeout{** loaded for the language `#1'. Using the pattern for}%
\typeout{** the default language instead.}%
\else
\language=\csname l@#1\endcsname
\fi
#2}}
\providecommand{\BIBdecl}{\relax}
\BIBdecl

\bibitem{krstic2000stability}
M.~Krstic and H.-H. Wang, ``Stability of extremum seeking feedback for general
  nonlinear dynamic systems,'' \emph{Automatica}, vol.~36, no.~4, pp. 595--601,
  2000.

\bibitem{ariyur2003real}
K.~B. Ariyur and M.~Krstic, \emph{Real-time optimization by extremum-seeking
  control}.\hskip 1em plus 0.5em minus 0.4em\relax John Wiley \& Sons, 2003.

\bibitem{tan2006non}
Y.~Tan, D.~Ne{\v{s}}i{\'c}, and I.~Mareels, ``On non-local stability properties
  of extremum seeking control,'' \emph{Automatica}, vol.~42, no.~6, pp.
  889--903, 2006.

\bibitem{tan2009global}
Y.~Tan, D.~Ne{\v{s}}i{\'c}, I.~M. Mareels, and A.~Astolfi, ``On global extremum
  seeking in the presence of local extrema,'' \emph{Automatica}, vol.~45,
  no.~1, pp. 245--251, 2009.

\bibitem{ghaffari2012multivariable}
A.~Ghaffari, M.~Krsti{\'c}, and D.~Ne{\v{s}}i{\'c}, ``Multivariable
  newton-based extremum seeking,'' \emph{Automatica}, vol.~48, no.~8, pp.
  1759--1767, 2012.

\bibitem{liu2015stochastic}
S.-J. Liu and M.~Krstic, ``Stochastic averaging in discrete time and its
  applications to extremum seeking,'' \emph{IEEE Transactions on Automatic
  control}, vol.~61, no.~1, pp. 90--102, 2015.

\bibitem{williams2026local}
A.~Williams, M.~Krstic, and A.~Scheinker, ``Local practically safe extremum
  seeking with assignable rate of attractivity to the safe set,''
  \emph{Automatica}, vol. 183, p. 112611, 2026.

\bibitem{williams2024semiglobal}
------, ``Semiglobal safety-filtered extremum seeking with unknown cbfs,''
  \emph{IEEE Transactions on Automatic Control}, vol.~70, no.~3, pp.
  1698--1713, 2024.

\bibitem{williams2026generalized}
A.~Williams, J.~Cort{\'e}s, and A.~Scheinker, ``Generalized multi-constraint
  extremum seeking,'' in \emph{2026 American Control Conference (ACC)}.\hskip
  1em plus 0.5em minus 0.4em\relax IEEE, 2026, pp. 1342--1349.

\bibitem{teel2001solving}
A.~R. Teel and D.~Popovic, ``Solving smooth and nonsmooth multivariable
  extremum seeking problems by the methods of nonlinear programming,'' in
  \emph{Proceedings of the 2001 American Control Conference.(Cat. No.
  01CH37148)}, vol.~3.\hskip 1em plus 0.5em minus 0.4em\relax IEEE, 2001, pp.
  2394--2399.

\bibitem{poveda2017framework}
J.~I. Poveda and A.~R. Teel, ``A framework for a class of hybrid extremum
  seeking controllers with dynamic inclusions,'' \emph{Automatica}, vol.~76,
  pp. 113--126, 2017.

\bibitem{poveda2021robust}
J.~I. Poveda and N.~Li, ``Robust hybrid zero-order optimization algorithms with
  acceleration via averaging in time,'' \emph{Automatica}, vol. 123, p. 109361,
  2021.

\bibitem{grushkovskaya2017extremum}
V.~Grushkovskaya, H.-B. D{\"u}rr, C.~Ebenbauer, and A.~Zuyev, ``Extremum
  seeking for time-varying functions using lie bracket approximations,''
  \emph{IFAC-PapersOnLine}, vol.~50, no.~1, pp. 5522--5528, 2017.

\bibitem{mimmo2024extremum}
N.~Mimmo, G.~Carnevale, A.~Testa, and G.~Notarstefano, ``Extremum seeking
  tracking for derivative-free distributed optimization,'' \emph{IEEE
  Transactions on Control of Network Systems}, vol.~12, no.~1, pp. 584--595,
  2024.

\bibitem{tsubakino2023extremum}
D.~Tsubakino, T.~R. Oliveira, and M.~Krstic, ``Extremum seeking for distributed
  delays,'' \emph{Automatica}, vol. 153, p. 111044, 2023.

\bibitem{poveda2021nonsmooth}
J.~I. Poveda and M.~Krsti{\'c}, ``Nonsmooth extremum seeking control with
  user-prescribed fixed-time convergence,'' \emph{IEEE Transactions on
  Automatic Control}, vol.~66, no.~12, pp. 6156--6163, 2021.

\bibitem{yilmaz2024prescribed}
C.~T. Yilmaz and M.~Krstic, ``Prescribed-time extremum seeking for delays and
  pdes using chirpy probing,'' \emph{IEEE Transactions on Automatic Control},
  vol.~69, no.~11, pp. 7710--7725, 2024.

\bibitem{grushkovskaya2024step}
V.~Grushkovskaya and C.~Ebenbauer, ``Step-size rules for lie bracket-based
  extremum seeking with asymptotic convergence guarantees,'' \emph{IEEE Control
  Systems Letters}, vol.~8, pp. 1967--1972, 2024.

\bibitem{mimmo2024uniform}
N.~Mimmo, L.~Marconi, and G.~Notarstefano, ``Uniform nonconvex optimization via
  extremum seeking,'' \emph{IEEE Transactions on Automatic Control}, vol.~69,
  no.~12, pp. 8263--8276, 2024.

\bibitem{pokhrel2026higher}
S.~Pokhrel and S.~A. Eisa, ``Higher-order lie bracket approximation and
  averaging of control-affine systems with application to extremum seeking,''
  \emph{Automatica}, vol. 188, p. 112950, 2026.

\bibitem{frihauf2012nash}
P.~Frihauf, M.~Krsti{\'c}, and T.~Ba{\c{s}}ar, ``Nash equilibrium seeking in
  noncooperative games,'' \emph{IEEE Transactions on Automatic Control},
  vol.~57, no.~5, pp. 1192--1207, 2012.

\bibitem{stankovic2012distributed}
M.~S. Stankovi{\'c}, K.~H. Johansson, and D.~M. Stipanovi{\'c}, ``Distributed
  seeking of nash equilibria with applications to mobile sensor networks,''
  \emph{IEEE Transactions on Automatic Control}, vol.~57, no.~4, pp. 904--919,
  2012.

\bibitem{ratto2026nested}
B.~Ratto, A.~Williams, M.~Krsti{\'c}, T.~Ba{\c{s}}ar, and A.~Scheinker,
  ``Nested extremum seeking converges to {Stackelberg} equilibrium,'' in
  \emph{Proc. 65th IEEE Conf. Decision Control (CDC)}, Honolulu, HI, USA, 2026,
  to appear; also available as arXiv:2603.24756.

\bibitem{scheinker2024100}
A.~Scheinker, ``100 years of extremum seeking: A survey,'' \emph{Automatica},
  vol. 161, p. 111481, 2024.

\bibitem{durr2013lie}
H.-B. D{\"u}rr, M.~S. Stankovi{\'c}, C.~Ebenbauer, and K.~H. Johansson, ``Lie
  bracket approximation of extremum seeking systems,'' \emph{Automatica},
  vol.~49, no.~6, pp. 1538--1552, 2013.

\bibitem{scheinker2014non}
A.~Scheinker and M.~Krsti{\'c}, ``Non-c 2 lie bracket averaging for nonsmooth
  extremum seekers,'' \emph{Journal of Dynamic Systems, Measurement, and
  Control}, vol. 136, no.~1, p. 011010, 2014.

\bibitem{suttner2023nonsmooth}
R.~Suttner, ``Nonsmooth optimization by lie bracket approximations into random
  directions,'' \emph{Systems \& Control Letters}, vol. 174, p. 105481, 2023.

\bibitem{suttner2024overcoming}
R.~Suttner and M.~Krsti{\'c}, ``Overcoming local extrema in torque-actuated
  source seeking using the divergence theorem and delay,'' \emph{Automatica},
  vol. 167, p. 111799, 2024.

\bibitem{suttner2026non}
R.~Suttner, C.~Ebenbauer, and S.~Dashkovskiy, ``Non-local extremum seeking
  based on the divergence theorem,'' \emph{arXiv preprint arXiv:2603.01200},
  2026.

\bibitem{abdelfattah2026nonsmooth}
H.~Abdelfattah, S.~A. Eisa, and P.~Stechlinski, ``Nonsmooth high-order
  averaging theory with application to extremum seeking optimization and
  control,'' \emph{arXiv preprint arXiv:2606.00969}, 2026.

\bibitem{scheinker2013extremum}
A.~Scheinker, M.~Bland, M.~Krsti{\'c}, and J.~Audia, ``Extremum seeking-based
  optimization of high voltage converter modulator rise-time,'' \emph{IEEE
  Transactions on Control Systems Technology}, vol.~22, no.~1, pp. 34--43,
  2013.

\bibitem{liu2020primer}
S.~Liu, P.-Y. Chen, B.~Kailkhura, G.~Zhang, A.~O. Hero~III, and P.~K. Varshney,
  ``A primer on zeroth-order optimization in signal processing and machine
  learning: Principals, recent advances, and applications,'' \emph{IEEE Signal
  Processing Magazine}, vol.~37, no.~5, pp. 43--54, 2020.

\bibitem{flaxman2004online}
A.~D. Flaxman, A.~T. Kalai, and H.~B. McMahan, ``Online convex optimization in
  the bandit setting: gradient descent without a gradient,'' \emph{arXiv
  preprint cs/0408007}, 2004.

\bibitem{folland1999real}
G.~B. Folland, \emph{Real analysis: modern techniques and their
  applications}.\hskip 1em plus 0.5em minus 0.4em\relax John Wiley \& Sons,
  1999.

\bibitem{evans1991measure}
L.~C. Evans and R.~F. Gariepy, \emph{Measure theory and fine properties of
  functions}.\hskip 1em plus 0.5em minus 0.4em\relax CRC press, 1991, vol.~5.

\bibitem{sanders2007averaging}
J.~A. Sanders, F.~Verhulst, and J.~Murdock, \emph{Averaging methods in
  nonlinear dynamical systems}.\hskip 1em plus 0.5em minus 0.4em\relax
  Springer, 2007, vol.~59.

\bibitem{bailleul2022explicit}
A.~Bailleul, ``Explicit kronecker--weyl theorems and applications to prime
  number races,'' \emph{Research in Number Theory}, vol.~8, no.~3, p.~43, 2022.

\bibitem{moreau2000practical}
L.~Moreau and D.~Aeyels, ``Practical stability and stabilization,'' \emph{IEEE
  Transactions on Automatic Control}, vol.~45, no.~8, pp. 1554--1558, 2000.

\bibitem{nocedal2006numerical}
J.~Nocedal and S.~J. Wright, \emph{Numerical optimization}.\hskip 1em plus
  0.5em minus 0.4em\relax Springer, 2006.

\bibitem{boyd2004convex}
S.~Boyd and L.~Vandenberghe, \emph{Convex Optimization}.\hskip 1em plus 0.5em
  minus 0.4em\relax Cambridge, UK: Cambridge University Press, 2004.

\bibitem{rudin1976principles}
W.~Rudin, \emph{Principles of Mathematical Analysis}, 3rd~ed.\hskip 1em plus
  0.5em minus 0.4em\relax New York: McGraw-Hill, 1976.

\end{thebibliography}

\end{document}